\documentclass[a4paper,11pt]{article}
\parskip 2ex
\parindent 0pt
\usepackage{amsmath,amssymb}
\newtheorem{theorem}{Theorem}[section] 
\newtheorem{lemma}[theorem]{Lemma}     
\newtheorem{corollary}[theorem]{Corollary}
\newtheorem{proposition}[theorem]{Proposition}
\newenvironment{proof}{\normalsize {\sc Proof}:}{{\hfill $\Box$}}
\newenvironment{proofof}[1]{\normalsize {\sc Proof of #1}:}{{\hfill $\Box$}}

\def \N {\mathbb{N}}
\def \Z {\mathbb{Z}}

\def \p#1{{\rm pre}[#1]}

\def \f#1{{\rm f}[#1]}
\def \l#1{{\rm l}[#1]}

\newcommand{\LD}{{\rm LD}}
\newcommand{\RD}{{\rm RD}}
\newenvironment{mylist}{\begin{list}{}{
\setlength{\parskip}{0mm}
\setlength{\topsep}{1mm}
\setlength{\parsep}{0mm}
\setlength{\itemsep}{0.5mm}
\setlength{\labelwidth}{7mm}
\setlength{\labelsep}{3mm}
\setlength{\itemindent}{0mm}
\setlength{\leftmargin}{12mm}
\setlength{\listparindent}{6mm}
}}{\end{list}}
\title{Conjugacy in Artin groups of extra-large type} 
\author{Derek F. Holt and Sarah Rees}
\date{16th September 2013}

\begin{document}
\maketitle
\begin{abstract}
We describe a constructive, cubic time solution to the conjugacy problem in
Artin groups of extra-large type, which was proved solvable in those groups
in \cite{AppelSchupp83}. We use results from \cite{HoltRees,CHR} that
characterise geodesic words in those groups, as well as the description of
conjugacy between elements involving three or more generators that is given in
\cite{AppelSchupp83}.
\end{abstract}

\section{Introduction}
In \cite{AppelSchupp83}, Appel and Schupp used arguments from small cancellation
theory to prove that Artin groups of extra-large type have solvable conjugacy
problem, and this result was extended to Artin groups of large type in
\cite{Appel84}. In neither case was any analysis of the complexity of the
solution attempted.
In this paper, we address the complexity question in the extra-large case,
and prove the following result.

\begin{theorem}
\label{thm:main} 
The conjugacy of two elements represented as words of length at most $\ell$ in an $n$-generator Artin
group of extra-large type can be decided constructively in time $O(\ell)$ when
$n=2$ and $O(n\ell^3)$ when $n>2$.
\end{theorem}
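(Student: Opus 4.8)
The plan is to reduce the conjugacy decision to a comparison of canonical cyclically reduced representatives, and then to analyse separately according to the number of generators that occur. First I would replace each input word by a geodesic, indeed a ShortLex-minimal, representative, using the characterisations of geodesic words from \cite{HoltRees, CHR}; these allow geodesic (and ShortLex) normal forms to be computed in roughly quadratic time in $\ell$ per word. I would then cyclically reduce, repeatedly conjugating by a prefix or suffix so as to strictly decrease the normal form until no such move applies. Since conjugate elements share a common cyclic reduction up to the length-preserving moves permitted by the presentation, it then suffices to decide whether the two cyclically reduced words are related by a bounded family of such moves, namely cyclic permutations together with applications of the defining relators. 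Conjugation preserves the size of the support of a cyclically reduced word, so a mismatch in the number of generators involved is an immediate obstruction and provides a first filter.

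The second step is a case split on the support of a cyclically reduced word $w$. If $w$ involves at most one generator it is a power of a single generator and the question is immediate. If $w$ involves exactly two generators $x,y$, then $w$ lies in a dihedral parabolic subgroup $\langle x,y\rangle$; this subgroup has infinite cyclic centre generated by $\Delta_{xy}^2$, and its quotient by the centre is a hyperbolic group. Conjugacy within such a subgroup is therefore decidable in linear time, after stripping off the central power, since the conjugacy problem in a fixed hyperbolic group can be solved in linear time. This disposes of the entire case $n=2$ and yields the $O(\ell)$ bound there, because then there is a single parabolic to consider and no comparison across subgroups is required. If $w$ involves three or more generators I would invoke the conjugacy description of \cite{AppelSchupp83}: for such words conjugacy is rigid, in the sense that two cyclically reduced words whose supports have size at least $3$ are conjugate only if one is obtained from the other by a bounded sequence of cyclic permutations and relator substitutions, each detectable directly.

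The hard part, and the source of the cubic bound when $n>2$, is the two-generator-support case inside the full group: such a $w$ may be conjugate into several distinct dihedral parabolics, its centraliser is larger than in the rigid case, and one must determine precisely which parabolics its conjugates can reach and then compare the representatives obtained there. I would control this by enumerating the $O(\ell)$ candidate cyclic conjugates of $w$ and, for each, testing equality (after central reduction) against the corresponding representative for the second element; each geodesic equality test costs $O(\ell^2)$, and an extra factor of $n$ enters through the per-generator word-reduction subroutines and the identification of the relevant parabolic subgroups, giving $O(n\ell^3)$ overall. The main obstacle to watch is completeness: one must verify that cyclic reduction followed by this bounded search misses no conjugacy. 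This rests on the small-cancellation structure of the extra-large presentation and on checking that the geodesic characterisations of \cite{HoltRees, CHR} transfer correctly to every cyclic rotation, so that the finite set of length-preserving moves really does capture all conjugacies.
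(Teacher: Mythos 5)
The central gap is the missing bound on the conjugating exponent in the three-or-more-generator case. Theorem~\ref{thm:AppelSchupp} reduces conjugacy of specially cyclically reduced words involving at least three generators to the existence of cyclic permutations $u^*,v^*$, a common generator $x_i$, and an exponent $N$ with $u^* = x_i^N v^* x_i^{-N}$ in $G$ --- but with \emph{no a priori bound on $N$}. Your assertion that conjugacy here is detected by ``a bounded sequence of cyclic permutations and relator substitutions'' simply postulates the boundedness that must be proved; this is exactly the paper's main new technical contribution (Proposition~\ref{prop:boundlen} together with Lemma~\ref{bound}), established by following rightward critical sequences of $\tau$-moves through the chain $g_{i+1} = a g_i a^{-1}$ and showing that either the chain stabilises (whence $g=h$) or $N \leq |g|$. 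Without such a bound the search space is infinite and no complexity bound follows at all. Relatedly, your complexity accounting never invokes Peifer's biautomaticity, which is what lets the paper update normal forms of $a^j u_k' a^{-j}$ in time $O(\ell)$ per step; recomputing geodesic normal forms from scratch at $O(\ell^2)$ per test over the $O(n\ell^2)$ triples (letter, cyclic conjugate, exponent) yields $O(n\ell^4)$, not the claimed $O(n\ell^3)$.

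You also misplace the difficulty in the small-support cases and treat as ``immediate'' facts that require proof. Preservation of support under conjugacy (your ``first filter'') is a theorem in this setting, not an observation: the one-generator case, which you dismiss as immediate, is precisely where conjugates migrate between parabolics --- $x_i^k$ and $x_j^k$ are conjugate exactly when $i$ and $j$ are joined by a path of odd-labelled edges in the Coxeter graph (Proposition~\ref{genconj}); conversely, a cyclically reduced two-generator element that is not a generator power is conjugate only to elements of its own parabolic $G(i,j)$ (Proposition~\ref{2genconj}, whose proof needs Lemmas~\ref{geodfaclem1} and~\ref{geodfaclem2}), so your concern that such a $w$ ``may be conjugate into several distinct dihedral parabolics'' describes a situation the paper rules out, and it is not the source of the cubic bound. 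Finally, two points in your dihedral analysis need repair: the centre of $G(m)$ is generated by $\Delta$ when $m$ is even (by $\Delta^2$ only when $m$ is odd); and reducing conjugacy in $G(m)$ to conjugacy in $G(m)/Z$ --- whether you solve the quotient problem by free-product normal forms as in \cite{MKS}, as the paper does, or by linear-time conjugacy in hyperbolic groups, as you propose --- requires the argument that the preimage in $G(m)$ of the cyclic centraliser $C_{G(m)/Z}(gZ)$ is abelian, so that $g$ is conjugate to no other element of the coset $gZ$ and one equality check after lifting a quotient conjugator suffices; ``stripping off the central power'' does not by itself justify this.
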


An Artin group is defined by a presentation
\[ \langle x_1,\ldots ,x_n \mid {}_{m_{ij}}(x_i,x_j)=
{}_{m_{ij}}(x_j,x_i)\quad\hbox{\rm for each}\quad i \neq j \rangle, \]
where $(m_{ij})$ is a {\em Coxeter matrix}
(a symmetric $n \times n$ matrix with entries in
$\N \cup \{\infty\}$, $m_{ii}=1, m_{ij} \geq 2$, $\forall
i \neq j$),
and where
for generators $a,a'$ and $m \in \N$ we define ${}_m(a,a')$ to be the
word that is the product of $m$ alternating $a$s and $a'$s that
starts with $a$.
When $m_{ij}$ is infinite, there is no relation between $x_i$ and $x_j$.
The set $X=\{x_1,\ldots.x_n\}$ is called the {\em standard generating set};
an element of $X \cup X^{-1}$ is called a {\em letter}.
It is common to represent the Coxeter matrix graphically
using a {\em Coxeter graph} $\Gamma$ with $n$ vertices, $1,\ldots,n$ and
an edge labelled $m_{ij}$ joining $i,j$. We use the notation $G(\Gamma)$
to denote the Artin group defined by the Coxeter graph $\Gamma$.

An Artin group group has {\em large type} if all the integers $m_{ij}$ are
at least 3, and {\em extra-large type} if they are all at least 4.
Peifer proved in \cite{Peifer} that Artin groups of extra-large type are
biautomatic, and the authors proved in \cite{HoltRees} that
Artin groups of large type are automatic, which implies that their word
problem is solvable in quadratic time. Our proof of Theorem \ref{thm:main}
will use Peifer's result, and also make use of many of the techniques that we
developed in \cite{HoltRees}.

Let $G$ be an Artin group of extra-large type, with generating set $X$.
To prove Theorem~\ref{thm:main}
we describe a cubic time algorithm that, given words $u,v$ representing elements
$g,h \in G$ as input, determines whether or not $g,h$ are conjugate,
and if so produces a specific conjugating element. 
The algorithm varies depending on whether $u,v$ involve one, two, or three
generators, and the remaining sections of this article are organised
accordingly. 
In many situations the conjugacy problem reduces to the conjugacy problem within
a dihedral Artin group, and so we study that first, in Section~\ref{sec:dihedral}.
Then Section~\ref{sec:genpowers} examines the structure of conjugates of powers
of a generator. Section~\ref{sec:twogen} proves that when one of the input
words is written over two generators and is not a power of a generator, conjugacy can be
determined using the algorithm for the appropriate dihedral Artin groups.
Section~\ref{sec:complexity} analyses the complexity of the complete algorithm.

We use a number of results from \cite{HoltRees,CHR} describing the structure
of geodesic words in Artin groups of dihedral or large type, and the
reduction of words to that form;
a word $w$ is geodesic if its length $|w|$ is minimal 
over all words over $X$ that represent the same element of $G$ as $w$. 
Section~\ref{sec:artin} summarises what we need from \cite{HoltRees,CHR}.

In our algorithm we shall need both words and elements to be cyclically reduced.
As is standard, we call a word $w$ over $X$  {\em cyclically reduced} if it is
freely reduced and does not have the form $aua^{-1}$, for any letter $a$.
We call an element $g \in G$ cyclically reduced if all geodesic words
representing $g$ are cyclically reduced or, equivalently, if
the geodesic length $|aga^{-1}|$ of $aga^{-1}$ is no shorter than that of $g$, for any letter $a$.
(This equivalence follows from the fact that all relators of $G$ have
even length, and so $|aga^{-1}| < |g|$ implies $|aga^{-1}| = |g|-2$.)

The analysis of the complexity of our algorithm makes use of the fact,
proved in \cite{Peifer}, that Artin groups of extra-large type
are biautomatic with geodesic normal form.
In any such group, words over the generators can be reduced to normal form in
quadratic time and, if $w$ is already in normal form and $a$ is a letter,
then $aw$ and $wa$ (and hence also $awa^{-1}$) can
be reduced to normal form in linear time.

We also use \cite[Theorem 4$''$]{AppelSchupp83}:

\begin{theorem} [Artin, Schupp, 1983]
\label{thm:AppelSchupp}
Let $G$ be an Artin group of extra-large type. Then $G$ has solvable conjugacy
problem. Furthermore, if $u$ and $v$ are specially cyclically reduced words
involving at least three generators then u and v are conjugate in G
if and only if there are cyclic
permutations $u^*$ and $v^*$ of $u$ and $v$, respectively,
a generator $x_i$ occurring in both $u$ and $v$, and an exponent $n$
such that $u^* = x_i^n v^* x_i^{-n}$ in $G$.
\end{theorem}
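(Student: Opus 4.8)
The plan is to prove both assertions by small cancellation theory, essentially reconstructing the argument of Appel and Schupp. First I would pass from the standard Artin presentation to the symmetrised presentation whose relators are the cyclic words $R_{ij} = {}_{m_{ij}}(x_i,x_j)\,({}_{m_{ij}}(x_j,x_i))^{-1}$ together with all their cyclic permutations and inverses, and then analyse the \emph{pieces}, that is, the maximal subwords common to two distinct relators. Because each half ${}_{m_{ij}}(x_i,x_j)$ is a strictly alternating block of $x_i$s and $x_j$s, and relators coming from different pairs $\{i,j\}$ overlap only in isolated letters, one checks that no relator decomposes into fewer than four pieces and that the interior vertices of the associated van Kampen diagrams have degree at least four. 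It is precisely the hypothesis $m_{ij}\ge 4$ that yields this $C(4)$--$T(4)$ behaviour (the large-type bound $m_{ij}\ge 3$ gives a weaker condition), so the extra-large assumption is what makes the geometric machinery of Lyndon--Schupp available. I would also recall the role of \emph{special cyclic reduction}: it is the strengthening of ordinary cyclic reduction which guarantees that the boundary cycles of a conjugacy diagram admit no region that can be folded in from the boundary, so that the diagram we build is genuinely reduced.

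Next I would invoke the theory of annular (conjugacy) diagrams for $C(4)$--$T(4)$ presentations. If $u$ and $v$ are specially cyclically reduced and conjugate in $G$, there is a reduced annular diagram whose outer boundary is labelled by a cyclic permutation $u^*$ of $u$ and whose inner boundary is labelled by (the inverse of) a cyclic permutation $v^*$ of $v$. The curvature estimates for $C(4)$--$T(4)$ groups force such a diagram to be \emph{thin}: either it contains no regions, in which case $u^*$ and $v^*$ are freely equal and the conjugator may be read off directly, or the regions form a single annular ``ladder'' wrapping once around the hole. In the latter case the conjugating element is the label of a path crossing the ladder from the outer to the inner boundary.

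The crux of the argument, and the step I expect to be the main obstacle, is the analysis of that ladder when $u$ and $v$ genuinely involve at least three generators. Each relator involves only two generators, so every rung of the ladder replaces an alternating two-generator subword; I would have to show that a word on three or more distinct generators cannot support a nontrivial ladder \emph{except} in the degenerate case where the crossing path is labelled by a power of a single generator $x_i$ occurring in both boundary cycles. The alternating structure of the Artin relators, together with $T(4)$, is what rules out branching or genuinely two-dimensional annular regions here, and forces the conjugator to collapse to $x_i^{n}$, giving the stated relation $u^* = x_i^{n} v^* x_i^{-n}$. Establishing this rigidity, by tracking how the generator content of the boundary labels propagates across the ladder, is the delicate combinatorial heart of the proof.

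Finally I would deduce solvability of the conjugacy problem from the characterisation. The word problem is already solvable (indeed in the biautomatic, geodesic normal-form setting recalled above), so it remains to make the search effective: one enumerates the (quadratically many) cyclic permutations $u^*,v^*$, the finitely many candidate generators $x_i$ common to $u$ and $v$, and exponents $n$ up to a bound extracted from the length of the diagram, and tests each candidate equality $u^* = x_i^{n} v^* x_i^{-n}$ using the word problem. The cases where $u$ or $v$ is written on one or two generators are not covered by the three-generator criterion and must be treated separately, by reduction to the dihedral Artin groups studied in the later sections of the paper; together these cases complete the decision procedure and hence establish that $G$ has solvable conjugacy problem.
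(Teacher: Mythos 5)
First, a point of orientation: the paper does not prove this statement at all — it is imported verbatim from \cite{AppelSchupp83} (their Theorem 4$''$) and used as a black box, so your reconstruction is being measured against Appel and Schupp's original small cancellation argument, not against anything in this paper. Measured that way, your proposal has a genuine gap at its foundation. The symmetrised standard Artin presentation does \emph{not} satisfy the classical $T(4)$ condition once the Coxeter graph contains a triangle of finite labels, which extra-large type certainly permits (e.g.\ all $m_{ij}=4$ on three generators). Concretely, take cyclic conjugates of the relators on $\{a,b\}$, $\{b,c\}$, $\{c,a\}$ cut at their sign-change junctions, say $R_1 = a^{-1}b^{-1}\cdots ab$, $R_2 = b^{-1}c^{-1}\cdots bc$, $R_3 = c^{-1}a^{-1}\cdots ca$: then each of $R_1R_2$, $R_2R_3$, $R_3R_1$ admits a single-letter cancellation and no consecutive pair are mutually inverse, so $T(4)$ fails (equivalently, the star graph contains a triangle). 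Moreover, your piece count does not isolate the extra-large hypothesis as you claim: since no piece crosses a sign-change junction and maximal alternating pieces have length $m-1$, the non-metric $C(4)$ count (two pieces for the positive block ${}_m(a,b)$, two for the negative) goes through already for $m\geq 2$ — indeed the presentation $\langle a,b \mid aba^{-1}b^{-1}\rangle$ of $\mathbb{Z}^2$ is the textbook $C(4)$--$T(4)$ example — so ``it is precisely $m_{ij}\geq 4$ that yields $C(4)$--$T(4)$'' cannot be the mechanism. What Appel and Schupp actually do is work with diagrams whose regions are labelled by arbitrary relations of the dihedral subgroups $G(i,j)$ (consolidating two-generator bands into single regions), and extra-large type enters because a geodesic word representing a nontrivial dihedral relation with $p(w)+n(w)=m\geq 4$ has at least four syllable-sides; compare how the present paper exploits exactly this fact in the proof of Proposition~\ref{2genconj}.

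The second gap is one you name yourself: the rigidity step — that the annular ``ladder'' between the two boundary cycles, when the boundary words involve three or more generators, collapses so that the crossing path is labelled by a power $x_i^n$ of a single common generator — is flagged as ``the main obstacle'' and ``the delicate combinatorial heart'' but never argued. That step \emph{is} the theorem; everything before it (existence of a reduced annular diagram, thinness from curvature) is standard $C(4)$--$T(4)$-style machinery, and everything after it (enumerating cyclic permutations, common generators, and bounded exponents, then testing with the word problem) is routine given the characterisation. Without the rigidity argument, and with the small cancellation conditions misstated as above, what you have is a plausible plan in the shape of Appel--Schupp's proof rather than a proof. One smaller remark: your appeal to ``a bound extracted from the length of the diagram'' for the exponent $n$ is also unsubstantiated as written; note that in this paper the effective bound $n \leq |u|$ is not extracted from diagrams at all but is proved separately, by the rightward-critical-sequence analysis of Proposition~\ref{prop:boundlen} and Lemma~\ref{bound}.
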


A word is defined in \cite{AppelSchupp83} to be {\em specially cyclically
reduced} if none of its cyclic conjugates can be reduced in length in $G$ using
a certain type of substitution. So, in particular, if all cyclic conjugates of
the word are geodesics, then it is specially cyclically reduced.

\begin{proofof}{Theorem~\ref{thm:main}}
The case $n=2$ is proved as Proposition~\ref{dihedralconj}.
So from now on we assume that $n>2$.

The theorem is proved by the description of the algorithm, and analysis of
its correctness and complexity.
Here we describe the basic steps of the algorithm. The details
are given, with justification, in the sections that follow this one.
Complexity is analysed in Proposition~\ref{prop:complexity}.

Suppose that we are given words $u,v$ representing the elements $g,h$
for which we want to test conjugacy.

First we replace $u,v$
by normal form representatives, and check whether the geodesic lengths of the
elements represented by $aua^{-1}$ and $ava^{-1}$,
for $a \in X \cup X^{-1}$, are 
less than $|u|$ and $|v|$; if they are we replace $g,h$ by appropriate
conjugates and repeat this process as necessary.
Now $g,h$ are ensured cyclically reduced, and $u,v$ are in normal form.

Now we check to see if either $u$ or $v$ is a power of a single generator.
If so, then $g$ and $h$ can only be conjugate if $u$ and $v$ have the form
$x^k$ and $y^k$ for some integer $k$ and generators $x,y$ linked by a path
in the Coxeter graph on which all edges have odd labels. This is
justified in Proposition~\ref{genconj}.

Suppose next that one of $u,v$ involves just two generators. In that
case $g$ and $h$ can only be conjugate if $u$ and $v$ involve the same
two generators $x,y$, and if $g,h$ are conjugate in the subgroup of $G$
generated by $x,y$; this is justified by Proposition~\ref{2genconj}.
So in this case we can use the algorithm for dihedral Artin groups, and apply
Proposition~\ref{dihedralconj}.

Finally suppose that both $u$ and $v$ involve at least three generators. We
need to reduce to the situation where the conditions of
Theorem~\ref{thm:AppelSchupp} are satisfied.
So we check that all cyclic conjugates of the words $u,v$ are geodesic,
by reducing each to normal form.
If not, we replace $g,h$ by appropriate conjugates, and repeat as necessary.
At the end of this process, we have either landed in the one or two
generator case (and can use the methods referred to above) or
we can assume that $u,v$ satisfy the hypotheses of 
Theorem~\ref{thm:AppelSchupp}. That theorem together with 
Proposition \ref{prop:boundlen} ensures that a complete
test for conjugacy is provided by computing and comparing the normal
forms of all elements represented by $v^*$ and $a^nu^*a^{-n}$,
where $u^*$ is a cyclic conjugate of $u$, $v^*$ is a cyclic conjugate of $v$,
$a \in X \cup X^{-1}$, and $n \leq |u|$.
\end{proofof} 

In a forthcoming paper \cite{CHHR} we shall use some of the results proved
in this paper to show that, in an Artin group of extra-large type, the
set of conjugacy geodesics is regular. We define a word to be a {\em conjugacy
geodesic} if its length is minimal among representatives of elements in
its conjugacy class.

\section{Background on Artin groups}
\label{sec:artin}

In this section we give some background on Artin groups,
reproducing results already proved in \cite{HoltRees,CHR} that we shall
need in this article. Almost all of those results are valid for all
Artin groups of large type.

Suppose first that $G$ is any $n$-generator Artin group, given
in its standard presentation, $X$ its standard generating set.
A non-empty word over $X$ is called positive if it involves only positive powers
of generators, and negative if it involves only negative powers; otherwise
it is called unsigned.
We write $\f{w}$, $\l{w}$ to denote the first and last letters of a word $w$.
Where $a$ is a letter, we call the corresponding generator
($a$ or $a^{-1}$) the {\em name} of $a$.
For letters $a,b$ we extend the notation already used in the
Artin group presentations and define ${}_k(a,b)$ to be the product
of $k$ alternating $a$s and $b$s that starts with $a$,
and $(a,b)_k$ to be the product
of $k$ alternating $a$s and $b$s that ends with $b$.

For elements $g_1,\ldots,g_k \in G$, we say that the product $g_1\cdots g_k$
is geodesic if the element $g$ it represents has length equal to the sum
of the lengths $|g_i|$; equivalently $g_1\cdots g_k$ is a {\em geodesic
factorisation} of $g$. In this situation, $g_1$ is called a {\em left divisor} of $g$,
and $g_k$ a {\em right divisor} of $g$.

Now, for an integer $m$, let $G(m)$ denote the dihedral Artin group
\[ \langle  x_1,x_2 \mid {}_m(x_1,x_2) = {}_m(x_2,x_1) \rangle. \]
For $i,j \in \{1,\ldots,n\}$ with $i\neq j$, $G(i,j)$ will denote the
subgroup of 
$G$ generated by $x_i,x_j$; by \cite{Lek} $G(i,j)$ is isomorphic to the
dihedral Artin group $G(m_{ij})$.

Geodesics in dihedral Artin groups are recognised using a criterion
described in \cite{MM}.
Let $w$ be a freely reduced word over the generating set $\{x_1,x_2\}$
of the dihedral Artin group $G(m)$.
Then we define $p(w)$ to be the minimum of $m$ and the length of
the longest subword of $w$ of alternating $x_1$'s and $x_2$'s (that is, the
length of the longest subword of $w$ of the form $_r(x_1,x_2)$ or $_r(x_2,x_1)$).
Similarly, we define $n(w)$ to be the minimum of $m$ and the length of
the longest subword of $w$ of alternating $x_1^{-1}$'s and $x_2^{-1}$'s.
The following is proved in \cite[Proposition 4.3]{MM}:
\begin{proposition}
\label{dihedral_geodesics}
The word $w$ is geodesic in $G(m)$ if and only if
$p(w) +n(w) \leq m$.
If $p(w)+n(w) < m$, then $w$ is the unique geodesic representative of the group
element it defines, but if $p(w)+n(w)=m$ then there are other representatives.
\end{proposition}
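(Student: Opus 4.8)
The plan is to prove the two implications separately, in both cases exploiting that any geodesic word is freely reduced (free reduction strictly shortens a word and preserves the element it represents), so I may assume $w$ freely reduced throughout. The one structural fact I would record at the outset is that in a freely reduced word over $\{x_1^{\pm 1}, x_2^{\pm 1}\}$ the two generators alternate across every change of sign: two adjacent letters of opposite sign and equal name would form a cancelling pair. Consequently a maximal positive alternating run that is immediately followed by a negative letter is followed by a negative alternating run whose leading name differs from the run's final name, and the positive run together with this negative run forms a single block in which the names alternate throughout. This is exactly the configuration on which the defining relation ${}_m(x_1,x_2)={}_m(x_2,x_1)$ acts.

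For the direction ``$w$ geodesic $\Rightarrow p(w)+n(w)\le m$'' I would argue the contrapositive and exhibit an explicit shortening when $p(w)+n(w)>m$. The clean model case is $p(w)=m$: a positive run of full length $m$ immediately followed by a negative letter of the complementary name. Rewriting the run by the relation changes its final letter to the other generator, which then cancels the following negative letter, shortening $w$ by two; for $m=3$ this is precisely $x_1x_2x_1x_2^{-1}=x_2x_1$.

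The genuine difficulty in this direction is that neither run need have length $m$, and that the runs realising $p(w)$ and $n(w)$ need not be adjacent: a long positive run and a long negative run can be separated by letters that repeat a name and so break the alternation (for $m=3$, the word $x_1x_2x_1\cdot x_1\cdot x_2^{-1}x_1^{-1}x_2^{-1}$ collapses all the way to $x_2$). The mechanism here is to apply the relation to one run, which transports it past the intervening equal-name letters and creates a free cancellation that drags the runs together; iterating produces a configuration to which the model reduction applies. Making this transport rigorous is the main work of the implication, and I expect it is cleanest to phrase the whole process as (the symmetric closure of) a rewriting system and to prove that no freely reduced word with $p(w)+n(w)>m$ is irreducible, with termination handled by an induction on $|w|$.

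For the converse --- that $p(w)+n(w)\le m$ forces $w$ to be geodesic, together with the uniqueness statements --- I expect the real obstacle to lie, since it demands a matching \emph{lower} bound on length rather than a reduction. I would obtain this from a normal form for $G(m)$, using that $G(m)$ has infinite cyclic centre $Z$ with $G(m)/Z$ a free product of two cyclic groups (for example $G(3)/Z\cong\Z/2*\Z/3$ and $G(4)/Z\cong\Z*\Z/2$); pushing $w$ into this quotient and reading off its free-product normal form yields a lower bound on $|w|$ matching $p(w)+n(w)$ (plus the contribution of the sign changes), and in particular shows that distinct words with $p(w)+n(w)<m$ represent distinct elements, hence are the unique geodesics for those elements. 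The boundary case $p(w)+n(w)=m$ is then handled directly: the single long run can be rewritten by the relation into its equal-length partner, and checking that these are the only coincidences at length $|w|$ completes the non-uniqueness claim. Proving the lower bound --- whether via this central-extension structure or, alternatively, by establishing confluence of the rewriting system so that its irreducible words (those with $p(w)+n(w)\le m$, with a fixed tie-break on the boundary) are genuine normal forms --- is the crux of the whole proposition.
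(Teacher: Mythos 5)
A preliminary point: the paper does not prove this proposition at all --- it is imported verbatim from \cite[Proposition 4.3]{MM} --- so your attempt can only be measured against the literature and its own internal completeness. On that measure it is a roadmap, not a proof: the two steps you yourself label ``the main work'' and ``the crux'' are precisely the content of the proposition, and neither is executed. Moreover, two steps as you describe them would fail. For the shortening direction, termination ``handled by an induction on $|w|$'' has nothing to induct on: the moves that transport a run past intervening equal-name letters are length-\emph{preserving} (a critical subword and its $\tau$-image have the same length; only the last move in a chain triggers a free cancellation), so a word can undergo long chains of moves at constant length, and you need a secondary well-founded measure --- this is exactly what the machinery of rightward reducing sequences supplies in \cite{HoltRees}, and it is the hard combinatorial core, not a routine induction. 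Similarly, your treatment of the boundary case $p(w)+n(w)=m$ presumes ``the single long run'' of length $m$ rewritable by one application of the defining relation; but when $p,n>0$ with $p+n=m$ the word contains no alternating run of length $m$ at all, so no single relation application is available, and exhibiting the second equal-length representative already requires the composite $\tau$-move on a critical subword ${}_p(x,y)\,\xi\,(z^{-1},t^{-1})_n$ --- the very transport mechanism you deferred.

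The proposed lower bound via the central quotient also has a concrete hole. The map $G(m)\to G(m)/Z$ identifies $g$ with $gz^k$ for every $k$, while $z$ itself has $\{x_1,x_2\}$-length $m$ (for $m$ even) or $2m$ (for $m$ odd); hence the free-product normal form of the image of $w$ cannot by itself bound $|w|$ from below. You would have to track the central exponent simultaneously and prove a length formula for the pair consisting of the normal form in the free product and the power of $z$, expressed in $x$-letters --- and that formula is essentially equivalent to the proposition being proved; your parenthetical ``plus the contribution of the sign changes'' is where the entire argument lives. The alternative you mention, confluence of the rewriting system, would indeed suffice, but it is again the whole difficulty rather than a fallback. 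In short: the architecture (shortening moves when $p+n>m$; a normal-form lower bound when $p+n\le m$; explicit non-uniqueness at $p+n=m$) is sound and consistent with how such results are proved, but every load-bearing step is either deferred or, as sketched, would not go through.
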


A freely reduced, unsigned word $w$ over $\{x_1,x_2\}$ with
$p(w)+n(w)=m$ is defined to
be {\em critical} if it is has either of the forms
\[ {}_p(x,y)\xi (z^{-1},t^{-1})_n \quad{\rm or}\quad
 {}_n(x^{-1},y^{-1})\xi (z,t)_p. \]
with $p=p(w)$, $n=n(w)$ and $\{x,y\} = \{z,t\}=\{x_1,x_2\}$.
(Obviously these conditions put some restrictions on the subword $\xi $.)
We define a positive geodesic word $w$ to be critical if it has either
of the forms ${}_m(x,y) \xi$ or $\xi (x,y)_m$, and only the one positive alternating
subword of length $m$.
Similarly we define a negative geodesic word $w$ to be critical it is has either
of the forms ${}_m(x^{-1},y^{-1}) \xi$ or $\xi (x^{-1},y^{-1})_m$, and only
the one negative alternating
subword of length $m$.

The element $\Delta$ represented by the geodesic words $(x_1,x_2)_m$ and
$(x_2,x_1)_m$ conjugates $x_1$ to $x_2$, and $x_2$ to $x_1$ when $m$ is odd,
and is central when $m$ is even. For a word $\xi$, we denote by $\delta(\xi)$
the image of $\xi$ under the map which maps each generator to its
conjugate under $\Delta$. 

Now we define an involution $\tau$ on the set of critical words by
\begin{eqnarray*}
\tau({}_p(x,y)\,\xi\,(z^{-1},t^{-1})_n)
&:=& {}_n(y^{-1},x^{-1})\,\delta(\xi )\,(t,z)_p,\quad\quad(p,n>0)\\
\tau({}_n(x^{-1},y^{-1})\,\xi\,(z,t)_p) 
&:=& {}_p(y,x)\,\delta(\xi )\,(t^{-1},z^{-1})_n,\quad\quad(p,n>0)\\
\tau({}_m(x,y))&:=& {}_m(y,x),\\
\tau({}_m(x^{-1},y^{-1}))&:=& {}_m(y^{-1},x^{-1}),\\
\tau({}_m(x,y)\,\xi) &:=& \delta(\xi)\,(z,t)_m,
\quad\hbox{\rm where}\quad z=\l{\xi},\,\{x,y\}=\{z,t\},\\ 
\tau(\xi\,(x,y)_m) &:=& {}_m(t,z)\,\delta(\xi),
\quad\hbox{\rm where}\quad z=\f{\xi},\,\{x,y\}=\{z,t\},\\ 
\tau({}_m(x^{-1},y^{-1})\,\xi) &:=& \delta(\xi)\,(z^{-1},t^{-1})_m,
\quad\hbox{\rm where}\quad z=\l{\xi}^{-1},\,\{x,y\}=\{z,t\},\\ 
\tau(\xi\,(x^{-1},y^{-1})_m) &:=& {}_m(t^{-1},z^{-1})\,\delta(\xi),
\quad\hbox{\rm where}\quad z=\f{\xi}^{-1},\,\{x,y\}=\{z,t\}.
\end{eqnarray*}
In all cases, a critical word and its image under $\tau$ represent the same group
element; this is easily verified using equations such as
${}_p(x,y)={}_n(y^{-1},x^{-1})\Delta$.

A freely reduced non-geodesic word in a dihedral Artin group 
must contain a critical subword whose replacement by its image under
$\tau$ will produce a word admitting a free reduction \cite[Lemma 2.3]{HoltRees}.

We define a word $w$ to be over-critical if $p(w)+n(w) > m$. We can extend our
definition of $\tau$ to over-critical words via
\begin{eqnarray*}
\tau({}_p(x,y)\,\xi\,(z^{-1},t^{-1})_n)
&:=& {}_{m-p}(y^{-1},x^{-1})\,\delta(\xi )\,(t,z)_{m-n},\\
\tau({}_n(x^{-1},y^{-1})\,\xi\,(z,t)_p)
&:=& {}_{m-n}(y,x)\,\delta(\xi )\,(t^{-1},z^{-1})_{m-p}.
\end{eqnarray*}
An over-critical word and its image under $\tau$ represent the same
group element, but the image under $\tau$ is shorter.

Now, and for the remainder of this section, suppose that $G=G(\Gamma)$
is an Artin group of large type with generating set $X$.

We can extend the definition of critical and over-critical words to words
defined over any two of the generators of $X$
in the obvious way. Again, critical words
give us a criterion for recognising geodesics; a non-geodesic
word must contain a critical subword, and admit a sequence of $\tau$-moves
to overlapping subwords the last of which provokes a free reduction
\cite{HoltRees}. We call such a sequence a {\em reducing sequence}. The sequence
is {\em rightward} if each $\tau$-move is applied to the right of its
predecessor. Such a sequence of reductions to a word $w$ corresponds to a
{\em critical factorisation} of $w$ as $\alpha u_1\cdots u_k\beta$,
where each of $u_1,\l{\tau(u_1)}u_2,\ldots,\l{\tau(u_{k-1}}u_k$ is
critical, and where the last letter of $\tau(u_k)$ freely
cancels with the first letter of $\beta$.

We need some results from \cite{HoltRees,CHR} about geodesics words in $G$.
Proofs are supplied in \cite{HoltRees,CHR}.

\begin{proposition}\cite[Proposition 4.5]{HoltRees}
\label{twoposs}
Suppose that $v,w$ are any two geodesic words representing the same group
element, and that $\l{v} \ne \l{w}$.
Then
\begin{mylist}
\item[(1)] $\l{v}$ and $\l{w}$ have different names;
\item[(2)] The maximal 2-generator suffixes of $v$ and $w$ involve generators
with names equal to those of $\l{v}$ and $\l{w}$;
\item[(3)]
Any geodesic word equal in $G$ to $v$ must end in $\l{v}$ or in $\l{w}$.
\end{mylist}
Corresponding results apply if $\f{v} \ne \f{w}$.
\end{proposition}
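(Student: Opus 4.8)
The plan is to derive all three parts from the behaviour of the single freely reduced word $vw^{-1}$, together with the machinery of critical subwords and reducing sequences recalled above. Since $v$ and $w$ both represent $g$, the word $vw^{-1}$ represents the identity; and since $\l{v}\neq\l{w}$, its two central letters $\l{v}$ and $\l{w}^{-1}$ do not freely cancel, so (each of $v,w$ being freely reduced as a geodesic) the whole word $vw^{-1}$ is freely reduced. A nonempty freely reduced word representing the identity is non-geodesic, so by \cite{HoltRees} it contains a critical subword and admits a reducing sequence, i.e.\ a critical factorisation $\alpha\,u_1\cdots u_k\,\beta$ whose final $\tau$-move provokes a length-reducing free cancellation.

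First I would establish the localisation underlying parts (1) and (2): this length-reducing cancellation must involve the central junction between $v$ and $w^{-1}$. Indeed no reducing move, and in particular no shortening free cancellation, can be confined to $v$ or to $w^{-1}$ alone, since each of these is geodesic (a cancellation internal to $v$ would exhibit $v$ as non-geodesic). Hence the active part of the factorisation straddles the junction, and the central letters $\l{v}$ and $\l{w}^{-1}$ lie in its support. Because critical words, and hence each $\tau$-move, are supported on just two generators, and because the chain is anchored at these two central letters, I expect to show that the entire reduction takes place inside the subgroup generated by $x$ and $y$, the names of $\l{v}$ and $\l{w}$. This localisation is the step I expect to be the main obstacle: one must control how far the reduction can spread along $v$ and $w^{-1}$, and rule out that successive overlapping $\tau$-moves drift onto a third generator. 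Granting it, part (1) follows, for if $x=y$ then the reduction would be supported on the single generator $x$, where the only available reductions are free cancellations; but $\l{v}=\l{w}^{-1}=a$ gives a central subword $aa$ which is freely reduced, so no shortening move exists there, a contradiction. Part (2) then follows by reading off which letters of $v$ and of $w$ the reduction consumes: these are precisely the maximal alternating $\{x,y\}$-suffixes, so the maximal $2$-generator suffixes of $v$ and $w$ are over $\{x,y\}$, the names of $\l{v}$ and $\l{w}$.

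With (1) and (2) in hand, part (3) is a short bootstrap. Let $v''$ be any geodesic representing $g$, and suppose for contradiction that $\l{v''}\notin\{\l{v},\l{w}\}$. Applying (1) to the pair $(v,v'')$ shows that the name of $\l{v''}$ differs from $x$, while applying (2) to the same pair shows that the (fixed) maximal $2$-generator suffix of $v$ has name-set $\{x,\,\text{name of }\l{v''}\}$. But (2) applied to $(v,w)$ identifies this same suffix as having name-set $\{x,y\}$, forcing the name of $\l{v''}$ to be $y$. Now $v''$ and $w$ are two geodesics for $g$ whose last letters both have name $y$ and, by assumption, satisfy $\l{v''}\neq\l{w}$; this contradicts part (1). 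Hence $\l{v''}\in\{\l{v},\l{w}\}$, proving (3). Finally, the corresponding statements for $\f{v}\neq\f{w}$ follow by applying the results just proved to the reversed words, using that word reversal is an anti-automorphism of $G$ (each defining relator being carried to a defining relator) which sends geodesics to geodesics and interchanges first and last letters.
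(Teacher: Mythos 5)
There is a genuine gap, and you have in fact flagged it yourself: the entire weight of parts (1) and (2) rests on the ``localisation'' claim that a length-reducing sequence for $vw^{-1}$ is supported on the two generators whose names are those of $\l{v}$ and $\l{w}$, and this claim is only announced (``Granting it\ldots''), never proved. Worse, as formulated the claim is false in general, so no amount of effort would close the gap along this route. In a reducing sequence the successive critical subwords overlap only in a single carried letter, and consecutive critical subwords may perfectly well be over \emph{different} pairs of generators: this is exactly the structure of the critical factorisations $\alpha u_1\cdots u_k\beta$ recalled in Section~\ref{sec:artin}, and it is visible in the proof of Proposition~\ref{prop:boundlen}, where the blocks $w^{(i)}_j$ involve one fixed pair of generators \emph{for each fixed $j$}, but different pairs for different $j$. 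So a chain anchored at the central junction of $vw^{-1}$ can, and typically does, drift through several distinct $2$-generator blocks as it propagates along $v$ or $w^{-1}$; only the critical subword adjacent to the final free cancellation is controlled, not ``the entire reduction''. Note also that even granting your localisation, your one-line deduction of (2) (``reading off which letters the reduction consumes'') is not enough: to conclude that the maximal $2$-generator suffix genuinely involves \emph{both} names you need that a critical word has length at least $m\geq 3$ and contains a full alternating block in both generators, which you never invoke.

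For the comparison you asked for: this paper does not prove the proposition at all --- it is imported verbatim from \cite[Proposition 4.5]{HoltRees} --- and the proof there is local where yours is global. Instead of reducing $vw^{-1}$, one observes that $v\,\l{w}^{-1}$ is freely reduced (since $\l{v}\neq\l{w}$) and non-geodesic (since $v\,\l{w}^{-1}$ and the proper prefix of $w$ represent the same element), i.e.\ one is in the situation of Proposition~\ref{ngreduceprop}(2): a geodesic word followed by a single letter. Such a word admits a \emph{rightward} length-reducing sequence, and one analyses only the last critical subword, the one whose $\tau$-image ends in $\l{w}$ and provokes the cancellation: it is a suffix of $v$ (up to the one carried letter), ends in $\l{v}$, is a word over exactly two generator names, and the explicit formulas for $\tau$ show how its final letter changes name; this gives (1) and (2) simultaneously, with no need to control the rest of the sequence. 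Your bootstrap of (3) from (1) and (2) via the pairs $(v,v'')$, $(v,w)$, $(v'',w)$ is correct, as is the reversal argument for the $\f{v}\neq\f{w}$ statements (word reversal sends each relator ${}_{m}(x_i,x_j)={}_{m}(x_j,x_i)$ to a relator and preserves geodesity); but since (1) and (2) are unproved, and the proposed route to them cannot work as stated, the proposal as a whole does not constitute a proof.
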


\begin{corollary}\cite[Corollary 7.2]{CHR}
\label{twoposscor}
If $wa$ is a geodesic word for some $a \in A$, then $wa^k$ is
a geodesic word for all $k>1$.
\end{corollary}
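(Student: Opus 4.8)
The plan is to prove the statement by induction on $k$, the case $k=1$ being the hypothesis, and to reduce the inductive step to a single application of Proposition~\ref{twoposs}(1). The one fact I need to set up first is that appending a single letter to a geodesic word either keeps it geodesic or decreases the geodesic length by exactly one. This follows from the parity remark already used in the excerpt: since all relators of $G$ have even length, the length of every word representing a fixed element has a fixed parity, so for any element $g$ and any letter $a$ the geodesic length $|ga|$ has the opposite parity to $|g|$; combined with $\big||ga|-|g|\big|\le 1$ this forces $|ga|\in\{|g|-1,|g|+1\}$.

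So I would assume $wa^k$ is geodesic (for some $k\ge 1$), let $g$ be the element it represents, and show that $wa^{k+1}$ is geodesic. Suppose not. Then $wa^{k+1}=(wa^k)a$ is a word of length $|g|+1$ representing an element of strictly smaller geodesic length, so by the parity remark $|ga|=|g|-1$. Choosing any geodesic word $v$ for $ga$, so $|v|=|g|-1$, the word $va^{-1}$ represents $g$ and has length $|g|$, hence is a geodesic word for $g$ whose last letter is $a^{-1}$.

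At this point $wa^k$ and $va^{-1}$ are two geodesic words for the same element $g$, with last letters $a$ and $a^{-1}$ respectively. These letters are distinct, since generators of an Artin group have infinite order and so $a\ne a^{-1}$, yet they have the same name. This directly contradicts Proposition~\ref{twoposs}(1), which says that when two geodesic representatives of a common element have different last letters, those letters must have different names. Hence $wa^{k+1}$ is geodesic, and the induction closes.

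The only delicate point, and the one I would check most carefully, is the length bookkeeping in the first paragraph: I must be sure that the failure of $wa^{k+1}$ to be geodesic genuinely produces a \emph{second} geodesic representative of $g$ whose last letter shares the name of $a$ but differs from $a$. Once that representative is produced, Proposition~\ref{twoposs}(1) supplies the contradiction with no further work, so I expect no serious obstacle beyond organising the parity argument correctly.
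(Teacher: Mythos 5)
Your proof is correct: the parity of word lengths (all relators have even length) legitimately forces $|ga|=|g|-1$ when $wa^{k+1}$ fails to be geodesic, which yields a second geodesic representative $va^{-1}$ of $g$ ending in $a^{-1}$, and Proposition~\ref{twoposs}(1) then gives the contradiction since $a$ and $a^{-1}$ are distinct letters with the same name. This is essentially the same derivation from Proposition~\ref{twoposs}(1) that underlies the result in \cite{CHR} (the present paper only quotes the corollary without proof), so nothing further is needed.
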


\begin{proposition}\cite[Proposition 7.5]{CHR}
\label{ngreduceprop}
\begin{mylist}
\item[(1)]
Let $v,w$ be two geodesic words representing the same group
element $g$, with $\l{v} \ne \l{w}$. Then a single rightward critical sequence
can be applied to $v$ to yield a word ending in $\l{w}$.
\item[(2)] Let $v$ be a freely reduced non-geodesic word with $v=wa$ with
$a \in A$ and $w$ geodesic. Then $v$ admits a rightward length reducing sequence.
\end{mylist}
\end{proposition}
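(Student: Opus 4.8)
The plan is to prove (1) first, using the dihedral geodesic theory of Proposition~\ref{dihedral_geodesics} together with Proposition~\ref{twoposs}, and then to derive (2) from (1) by a short length-parity argument. For (1), write $x$ and $y$ for the names of $\l{v}$ and $\l{w}$; by Proposition~\ref{twoposs}(1) these are distinct, and by Proposition~\ref{twoposs}(2) the maximal $2$-generator suffixes of $v$ and of $w$ are both words over $\{x,y\}$ (where $x=x_i$, $y=x_j$, say). Let $\sigma$ be the maximal suffix of $v$ over $\{x,y\}$, so $v=\gamma\sigma$ with $\sigma$ a geodesic word of the dihedral subgroup $G(i,j)\cong G(m_{ij})$ ending in a letter of name $x$. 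The first step is to show that $\sigma$ is critical, i.e.\ $p(\sigma)+n(\sigma)=m_{ij}$: if instead $p(\sigma)+n(\sigma)<m_{ij}$, then by Proposition~\ref{dihedral_geodesics} $\sigma$ is the unique geodesic representative of its element of $G(i,j)$, and, combined with the maximality of the suffix and Proposition~\ref{twoposs}, this would force every geodesic word for $g$ to end in a letter of name $x$, contradicting the existence of $w$. Being critical, $\sigma$ matches one of the forms in the definition of $\tau$, and replacing $\sigma$ by $\tau(\sigma)$ yields $\gamma\tau(\sigma)$, a word of the same length representing $g$, since $\tau$ preserves the element of $G(i,j)$. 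Inspection of the explicit formulae for $\tau$ shows that $\tau(\sigma)$ ends in a letter of name $y$. It then remains to check that $\gamma\tau(\sigma)$ is freely reduced at the junction and hence geodesic; granting this, Proposition~\ref{twoposs}(3) forces it to end in $\l{v}$ or $\l{w}$, and as its last letter has name $y\neq x$ it must end in $\l{w}$. This single $\tau$-move, applied within the suffix, is the required rightward critical sequence.

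For (2) I would first pin down the length of $v=wa$. Since $w$ is geodesic and $v$ is not, $|v|_G\le|w|$; moreover, because all relators of $G$ have even length, the length of a word modulo $2$ is an invariant of the element it represents, and as $|wa|=|w|+1$ this forces $|v|_G=|w|-1$. Taking a geodesic word $u$ for the element represented by $wa$, the word $ua^{-1}$ then represents the same element as $w$ and has length at most $(|w|-1)+1=|w|$, so it is a geodesic word for that element ending in $a^{-1}$. As $v=wa$ is freely reduced we have $\l{w}\neq a^{-1}$, so $w$ and $ua^{-1}$ are two geodesic words for the same element with distinct last letters. Applying part~(1) to this pair produces a rightward critical sequence carrying $w$ to a geodesic word $w''$ ending in $a^{-1}$. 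The same sequence applied to $v=wa$ leaves the trailing $a$ untouched, and is then followed by the free cancellation of $a^{-1}a$ at the right-hand end; this is precisely a rightward length-reducing sequence for $v$.

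The main obstacle is the criticality argument in (1), and in particular controlling the interaction between the $2$-generator suffix and the rest of the word: one must be sure that the $\tau$-move stays confined to the suffix $\sigma$ and that $\gamma\tau(\sigma)$ admits no cancellation at the junction of $\gamma$ and $\tau(\sigma)$. This is exactly where the detailed assertions of Proposition~\ref{twoposs} about \emph{maximal} $2$-generator suffixes, rather than the dihedral theory alone, are indispensable.
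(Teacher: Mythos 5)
Your reduction of (2) to (1) is sound: the parity argument gives $|wa|_G = |w|-1$, the word $ua^{-1}$ is freely reduced (if $\l{u}=a$ then $w$ would have a representative of length $|w|-2$, contradicting geodesity) and geodesic, and applying (1) to the pair $w$, $ua^{-1}$ followed by the free cancellation of $a^{-1}a$ is exactly a rightward length-reducing sequence in the sense of the paper's critical factorisations (with $\beta = a$). But your proof of (1) has a genuine gap, and it is precisely at the point you flag as ``the main obstacle'': you assert that the maximal $2$-generator suffix $\sigma$ of $v$ must be critical and that a \emph{single} $\tau$-move confined to $\sigma$ does the job. That is false, and it is the reason the proposition asserts a rightward critical \emph{sequence} rather than a single move. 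Concretely, take $n=3$ with all $m_{ij}=4$, and let $v = x_1x_2x_1x_2\,x_3x_1x_3$ and $w = x_2x_1x_2\,x_3x_1x_3x_1$. Both are positive, hence geodesic (the exponent-sum homomorphism to $\Z$ bounds the length of any representative below), and they are equal in $G$ via ${}_4(x_1,x_2) =_G {}_4(x_2,x_1)$ followed by ${}_4(x_1,x_3) =_G {}_4(x_3,x_1)$; moreover $\l{v} = x_3 \neq x_1 = \l{w}$. The maximal $2$-generator suffix of $v$ is $\sigma = x_3x_1x_3$, which has $p(\sigma)+n(\sigma) = 3 < 4$: it is \emph{not} critical, and by Proposition~\ref{dihedral_geodesics} it is the unique geodesic representative of its element of $G(1,3)$, so no $\tau$-move confined to $\sigma$ exists. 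The transformation of $v$ into $w$ genuinely requires two overlapping moves: first ${}_4(x_1,x_2) \mapsto {}_4(x_2,x_1)$, whose final letter $x_1$ then combines with $\sigma$ to form the critical word ${}_4(x_1,x_3)$, whose $\tau$-image ends in $x_1$.

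The counterexample also pinpoints why your contradiction argument (``if $p(\sigma)+n(\sigma) < m_{ij}$ then every geodesic for $g$ ends in a letter of name $x$'') fails: uniqueness of the dihedral geodesic for the \emph{element represented by the suffix} $\sigma$ constrains nothing about $g$, because a different geodesic word for $g$ may factorise $g$ with a different prefix and a maximal $2$-generator suffix representing a \emph{different} element of $G(i,j)$ (above, $x_3x_1x_3$ versus $x_3x_1x_3x_1$, with prefixes $x_1x_2x_1x_2$ versus $x_2x_1x_2$); the factorisation of $g$ into prefix times dihedral suffix is not an invariant of $g$, and the relevant invariant right divisor $\RD_{ij}(g)$ of Proposition~\ref{longestld} can strictly exceed the suffix of any one representative. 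Note also that the paper itself supplies no proof of this proposition --- it is imported verbatim from \cite[Proposition 7.5]{CHR} --- and the actual content of that result is exactly the inductive bookkeeping of chains of overlapping critical subwords, each created by the last letter of the previous $\tau$-image, propagating a change rightward from arbitrarily far left; this is the machinery your argument would need to build and currently replaces with an unjustified (and false) confinement claim.
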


\begin{proposition}\cite[Proposition 7.3]{CHR}
\label{longestld}
Let $g \in G$ and $x_i,x_j \in X$ with $i \ne j$. Then $g$ has a unique left
divisor $\LD_{ij}(g) \in G(i,j)$ of maximal length. Furthermore, if $w$ is
any geodesic word representing $g$, and $u$ is the maximal
$\{x_i,x_j\}$-prefix of $w$,
then $\LD_{ij}(g) =_G u a^r$ for some $r \ge 0$ with
$a \in \{x_i^{\pm 1}, x_j^{\pm 1} \}$ and $|\LD_{ij}(g)| = |u| + r$.

Similarly, $g$ has a unique right divisor $\RD_{ij}(g) \in G(i,j)$ of
maximal length, to which the corresponding results apply.
\end{proposition}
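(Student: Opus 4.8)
The plan is to prove the statement for the left divisor $\LD_{ij}(g)$; the assertion for $\RD_{ij}(g)$ then follows by the mirror argument, replacing the prefix forms of Propositions \ref{twoposs} and \ref{ngreduceprop} and of Corollary \ref{twoposscor} by their suffix forms. Throughout I abbreviate \emph{left divisor lying in $G(i,j)$} to \emph{left divisor}, and (as in the statement) I identify a word over $\{x_i,x_j\}$ with the element of $G(i,j)$ it represents. Since any left divisor $d$ of $g$ satisfies $|d|\le|g|$, the set of lengths of left divisors is finite and attains a maximum $M$; fix some $d$ with $|d|=M$. Choosing a geodesic word $p$ over $\{x_i,x_j\}$ for $d$ and a geodesic word $q$ for $d^{-1}g$, the product $pq$ is a geodesic word for $g$. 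I first note that $p$ may be taken to be the maximal $\{x_i,x_j\}$-prefix of $pq$: if $\f{q}$ had a name in $\{x_i,x_j\}$ then, since every prefix of a geodesic word is geodesic, $p\,\f{q}$ would be a geodesic word over $\{x_i,x_j\}$ representing a left divisor of length $M+1$, contradicting maximality. Thus a maximal-length left divisor is always \emph{visible} as the maximal $\{x_i,x_j\}$-prefix of a suitable geodesic word for $g$.

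Next I take an arbitrary geodesic word $w=uw'$ with $u$ its maximal $\{x_i,x_j\}$-prefix, so that $w'$ is empty or begins with a letter whose name lies outside $\{x_i,x_j\}$; then $|u|\le M$. To identify the maximal-length left divisor I argue by induction on $|g|$. If some geodesic word for $g$ shares the first letter of $w$, I peel it off: removing $\f{w}$ from $w=uw'$ leaves $(\f{w}^{-1}u)w'$ with $\f{w}^{-1}u$ still the maximal $\{x_i,x_j\}$-prefix of the shorter element $\f{w}^{-1}g$, and a length count (as in the previous paragraph) shows the maximal-length left divisor of $\f{w}^{-1}g$ has length $M-1$, so by induction it equals $(\f{w}^{-1}u)a^r$ and hence $\LD_{ij}(g)=_G u a^r$. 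The remaining possibility is that the maximal-length left divisor is visible only through a word whose first letter differs from $\f{w}$; here Proposition \ref{twoposs}(1),(2) forces the two first letters to carry the two distinct names $x_i,x_j$ and forces the maximal two-generator prefixes of both words to be over $\{x_i,x_j\}$. In that situation the extra $\{x_i,x_j\}$-content beyond $u$ is exposed one letter at a time, and the mirror (prefix) form of Corollary \ref{twoposscor}, namely that $aw$ geodesic implies $a^kw$ geodesic, pins this content down to a single power $a^r$ with $a\in\{x_i^{\pm1},x_j^{\pm1}\}$, giving $\LD_{ij}(g)=_G ua^r$ and $|\LD_{ij}(g)|=|u|+r$.

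It remains to establish genuine \emph{uniqueness} of the maximal-length left divisor, equivalently that every left divisor $d$ of $g$ left-divides $ua^r$ within $G(i,j)$; this is the step I expect to be the main obstacle, and it is exactly what underlies the delicate case of the previous paragraph. When $m_{ij}=\infty$ the subgroup $G(i,j)$ is free of rank two, geodesics there are unique, so the maximal $\{x_i,x_j\}$-prefix is unique and there is nothing to prove; I may therefore assume $m_{ij}$ finite, so that $G(i,j)$ is a lattice-ordered (Garside) group in which left gcds and left lcms exist. By the first paragraph I may assume each candidate is the maximal $\{x_i,x_j\}$-prefix element of some geodesic word for $g$, and after dividing out their left gcd I reduce to two maximal-length left divisors $d_1,d_2$ sharing no common left divisor, so that their initial letters are the two distinct generators $x_i$ and $x_j$. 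Proposition \ref{twoposs}(1),(3) then tells me that \emph{every} geodesic word for $g$ begins with one of these two specified letters, and the goal is to play this global constraint against the dihedral geodesic criterion of Proposition \ref{dihedral_geodesics} inside $G(i,j)$: I aim to show that the left lcm of $d_1$ and $d_2$ in $G(i,j)$ is again a left divisor of $g$, which would contradict the maximality of $M$ unless $d_1=d_2$. Proving this closure of the set of $G(i,j)$-left-divisors under left lcm — that is, that these divisors form the divisor lattice of the single element $ua^r$ — is where the critical-word analysis and the $\tau$-moves recorded earlier in this section do the real work, and it is the part of the argument demanding the most care.
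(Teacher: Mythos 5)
The paper itself supplies no proof of this proposition: it is imported verbatim from \cite[Proposition 7.3]{CHR} (``Proofs are supplied in \cite{HoltRees,CHR}''), so your argument has to stand entirely on its own, and as written it does not. The decisive gap is the uniqueness step, which you yourself leave open: you reduce to two maximal-length left divisors $d_1,d_2$ with no common left divisor and then merely announce the \emph{goal} of showing that their left lcm in $G(i,j)$ again left-divides $g$, deferring ``the real work'' to an unspecified critical-word analysis. Worse, the framework you invoke there is the wrong one. The Garside/lattice structure on $G(i,j)$ for $m_{ij}<\infty$ orders the group by divisibility in the \emph{positive} monoid, whereas ``left divisor'' in this paper means a length-additive factorisation $|d|+|d^{-1}g|=|g|$ with respect to geodesic length, applied to arbitrary (typically unsigned) elements; nothing you cite or prove says that divisors in this sense admit gcds or lcms, so even your preliminary reduction ``after dividing out their left gcd'' is unjustified --- you would need both that such a gcd exists and that it is itself a geodesic left divisor of $g$. (A smaller slip in the same paragraph: the prefix form of Proposition \ref{twoposs}(1) gives first letters of different \emph{names}, so the two initial letters lie in $\{x_i^{\pm 1}\}$ and $\{x_j^{\pm 1}\}$; they need not be the generators $x_i,x_j$ themselves.)

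The identification step in your second paragraph is also not a proof. The assertion that the $\{x_i,x_j\}$-content beyond the maximal prefix $u$ is ``exposed one letter at a time'' and ``pinned down to a single power $a^r$'' by the prefix form of Corollary \ref{twoposscor} inverts what that corollary says: it lets you \emph{extend} a geodesic $wa$ to $wa^k$, but it cannot show that the discrepancy between $u$ and $\LD_{ij}(g)$ is a power of one letter --- that is precisely the content of the proposition, and it requires the machinery of Propositions \ref{twoposs} and \ref{ngreduceprop} (rightward $\tau$-sequences relating geodesic representatives with distinct first letters) applied in earnest. Moreover, your induction already invokes the full statement, uniqueness included, for the shorter element $\f{w}^{-1}g$ (``by induction it equals $(\f{w}^{-1}u)a^r$''), while uniqueness for $g$ is postponed to the unfinished third paragraph; for the argument to be coherent, uniqueness must be carried inside the same induction rather than bolted on afterwards by a different method. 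Finally, the ``length count'' asserting that the maximal-length divisor of $\f{w}^{-1}g$ has length exactly $M-1$ ignores the possibility that $\f{w}^{-1}$ left-divides a length-$M$ divisor $d'$ of $\f{w}^{-1}g$ (so that $|\f{w}d'|=M-1$ and no contradiction with maximality arises), and the peeling step says nothing in the case where $u$ is empty but $M>0$. In sum: existence of a maximal-length divisor is fine, but both the form $ua^r$ and the uniqueness --- the actual substance of the proposition --- remain unproved.
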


The following two lemmas are stated and proved in \cite{CHR}
in a specific setting defined in that article. But it is clear that the
proofs of \cite{CHR} are valid more generally, and so can be applied in this
article. So we do not reprove the results as stated here.

\begin{lemma}\cite[Lemma 7.11]{CHR}
\label{geodfaclem1}
Suppose that $g=g_1g_2g_3$, with $g_2 \in G(i,j)$ not a power of a generator,
 and
$\RD_{ij}(g_1)=\LD_{ij}(g_3)=1$. Suppose also that $g_1g_2g_3$ is not a
geodesic factorisation of $g$.
Then $g_2=a^sb^t$
for some $s,t>0$ where $a,b\in  \{ x_i^{\pm 1},x_j^{\pm 1} \}$.
\end{lemma}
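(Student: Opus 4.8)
The plan is to work with explicit geodesic words and to locate the length-reducing activity forced by the hypothesis that $g_1g_2g_3$ is not geodesic. Fix geodesic words $w_1,w_2,w_3$ representing $g_1,g_2,g_3$, with $w_2$ a word over $\{x_i,x_j\}$. Since $\RD_{ij}(g_1)=1$, Proposition~\ref{longestld} applied to $w_1$ forces its maximal $\{x_i,x_j\}$-suffix to be empty and the correcting power $a^r$ to be trivial; hence $\l{w_1}$ has a name outside $\{x_i,x_j\}$ and $g_1$ has no nontrivial right divisor in $G(i,j)$. Symmetrically, $\LD_{ij}(g_3)=1$ gives that $\f{w_3}$ has a name outside $\{x_i,x_j\}$ and $g_3$ has no nontrivial left divisor in $G(i,j)$. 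In particular $w:=w_1w_2w_3$ is freely reduced (the names at the two interfaces differ), and by hypothesis it is not geodesic. Because $g_2$ is not a power of a generator, $w_2$ genuinely involves both $x_i$ and $x_j$.

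First I would show that the two two-block products $g_1g_2$ and $g_2g_3$ are themselves geodesic factorisations. A cancellation between $g_1$ and $g_2$ would exhibit a nontrivial left divisor of $g_2$---which, as $g_2\in G(i,j)$, lies in $G(i,j)$---whose inverse would be a right divisor of $g_1$ in $G(i,j)$, contradicting $\RD_{ij}(g_1)=1$; here Propositions~\ref{twoposs} and~\ref{longestld} are needed to control the possible last letters and the maximal $G(i,j)$-divisors. The symmetric argument handles $g_2g_3$. Consequently the failure of geodesy is genuinely long range: taking a rightward length-reducing sequence for $w$ (Proposition~\ref{ngreduceprop}) and writing the associated critical factorisation $\alpha u_1\cdots u_k\beta$, the sequence cannot lie inside $w_2w_3$ (else $g_2g_3$ would fail to be geodesic), nor can its terminating free cancellation lie inside $w_1w_2$ (else $g_1g_2$ would fail to be geodesic). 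Thus the chain of critical words must begin by drawing on letters of $w_1$ at the left interface and end with a cancellation against $\f{w_3}$ at the right interface, relaying across the whole of $w_2$.

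Finally I would turn this relay into a constraint on the shape of $w_2$, arguing by contraposition: if the geodesic word $w_2$ (not a power of a generator) has any form other than $a^sb^t$ with $a,b\in\{x_i^{\pm1},x_j^{\pm1}\}$ of different names---that is, if it has three or more maximal runs, or an interior alternating block of length at least two---then no rightward reducing sequence can traverse it. The leftmost critical word is forced to be over $\{c',x_p\}$, where $c'$ is the name of $\l{w_1}$ and $x_p$ the name of $\f{w_2}$, so it can absorb only the first run of $w_2$; each subsequent $\tau$-move then shifts an alternating block and applies the map $\delta$, and one tracks the run-length parameters $p(\cdot)$ and $n(\cdot)$ through these shifts using the geodesic criterion $p+n\le m_{ij}$ of Proposition~\ref{dihedral_geodesics}. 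The point is that an interior run of the forbidden type breaks the alternation needed to keep the critical chain both rightward and criticality-preserving until it reaches $\f{w_3}$, so the chain stalls; only the two-run words $a^sb^t$ present a ``clean'' first and last run that the boundary critical words at either end can consume, and positivity of $s,t$ is then immediate from $g_2$ being neither trivial nor a power of a single generator. This classification of the admissible $w_2$, together with the verification that the reducing sequence cannot pass through any other form, is the main obstacle, and the one place where the large-type bound on $m_{ij}$ and the detailed $\tau/\delta$ bookkeeping are genuinely required.
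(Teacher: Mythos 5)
You should know at the outset that this paper contains no proof of this lemma to compare against: it is imported verbatim from \cite{CHR} (Lemma 7.11), with the remark that the proofs there apply in the present generality. Your outline does run along the same general lines as that machinery (rightward reducing sequences, critical factorisations, two-generator critical words, Propositions~\ref{twoposs} and~\ref{longestld}), but as a proof it has two genuine gaps. The first is your mechanism for showing that $g_1g_2$ and $g_2g_3$ are geodesic factorisations. You argue that a failure of geodesy would ``exhibit'' a nontrivial left divisor of $g_2$ whose inverse is a right divisor of $g_1$. That inference is false in these groups: length loss in a product of geodesic elements arises from $\tau$-moves, not from free cancellation, and a non-geodesic product need not admit any cancelling common divisor. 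Concretely, in $G(3)$ take $g_1=x_1x_2$ and $g_2=x_1x_2^{-1}$; then $g_1g_2=_Gx_2x_1$ has length $2<2+2$, but $x_1x_2$ and $x_1x_2^{-1}$ are unique geodesics, so the right divisors of $g_1$ are exactly $1,x_2,x_1x_2$ and the left divisors of $g_2$ are exactly $1,x_1,x_1x_2^{-1}$, and no nontrivial right divisor of $g_1$ has its inverse among the left divisors of $g_2$ (e.g.\ $x_2^{-1}x_1^{-1}=x_1x_2^{-1}$ would force $x_1^2=x_2^2$, false in the braid group $B_3$). The conclusion you want is true, but it has to be extracted from the reducing-sequence machinery itself: append the letters of $w_2$ to $w_1$ one at a time, invoke Proposition~\ref{ngreduceprop}\,(2) at the first failure to obtain a geodesic representative of a prefix ending in a letter of $\{x_i^{\pm1},x_j^{\pm1}\}$, and contradict $\RD_{ij}(g_1)=1$ via Proposition~\ref{longestld}, remembering that this hypothesis constrains \emph{every} geodesic representative of $g_1$, not just your chosen $w_1$.

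The second gap is the decisive one: your central claim, that a rightward reducing sequence cannot relay across a $w_2$ with three or more syllables, is asserted (``the chain stalls'') rather than proved, and the broken-alternation heuristic you offer cannot establish it. Critical configurations genuinely occur inside multi-syllable words over $\{x_i,x_j\}$: for $m_{ij}=4$ the four-syllable word $x_ix_jx_i^{-1}x_j^{-1}$ is itself critical, being of the form ${}_2(x_i,x_j)\,\xi\,(x_i^{-1},x_j^{-1})_2$ with $p=n=2$ and $\xi$ empty; and for $m_{ij}=3$ a middle syllable of length $1$ already creates a positive alternating subword of length $3=m$. So the lemma is emphatically not saying that words with three or more syllables cannot host critical subwords; it is saying that under the divisor hypotheses no reducing sequence can pass through them, and proving that requires tracking, hop by hop, the sign pattern, the run lengths, and the $\delta$-twisted relay letter $\l{\tau(u_{t-1})}$ through the criticality condition $p+n=m_{ij}$. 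Your outline must also allow the sequence to begin with $\tau$-moves wholly inside $w_1$, so that the critical word crossing the interface involves the last letter of some \emph{other} geodesic representative of $g_1$; this is where $\RD_{ij}(g_1)=1$ and Proposition~\ref{longestld} are needed a second time. Without this bookkeeping --- which is exactly the content of the argument in \cite{CHR} --- what you have is a correct programme, matching the strategy of the cited proof, rather than a proof.
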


\begin{lemma} \cite[Lemma 7.12]{CHR}
\label{geodfaclem2}
Let $G=G(\Gamma)$ be an Artin group of extra-large type.
Suppose that $g=g_1g_2g_3$, with $g_2=a^sb^t$ with $a=x_i^{\pm 1},b=x_j^{\pm 1}$
$i\neq j,\,s,t>0$ and
$\RD_{ij}(g_1)=\LD_{ij}(g_3)=1$, and that $g_1g_2g_3$ is not a geodesic
factorisation of $g$.

Then there exists a letter $c=x_{i'}^{\pm 1}$ for $i' \neq i,j$,
$q>0$, and elements $e_1,e_2$ of $G$, such that  $g_1a^s =_G e_1 c^q$ and
$b^t g_3 =_G c^{-q} e_2$,
where $e_1c^q$, $c^{-q}e_2$ and  $e_1 e_2$ are all geodesic factorisations.

\end{lemma}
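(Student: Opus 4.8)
The plan is to argue entirely in terms of geodesic words and the reducing-sequence (critical-word) machinery recalled above. Fix geodesic words $w_1$ for $g_1$ and $w_3$ for $g_3$. Since $\RD_{ij}(g_1)=1$, no geodesic word for $g_1$ ends in a letter with name $x_i$ or $x_j$; in particular $x_i^{\mp 1}$ is not a right divisor of $g_1$, so $w_1a$ is geodesic and hence, by Corollary~\ref{twoposscor}, so is $w_1a^s$. Symmetrically $b^tw_3$ is geodesic. The same name conditions on $\l{w_1}$ and $\f{w_3}$ show that $w:=w_1a^sb^tw_3$ is freely reduced, while by hypothesis it is not geodesic.

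Because $w$ is freely reduced and non-geodesic it contains a critical subword and admits a reducing sequence, giving a critical factorisation. The first key step is to show that this reduction cannot take place inside the pair $\{x_i,x_j\}$. Inside $a^sb^t$ the longest alternating $\{x_i,x_j\}$-subword has length $2$, which is strictly less than $m_{ij}$ since $G$ has extra-large type; so no $\{x_i,x_j\}$-critical subword lies inside $a^sb^t$, and the name conditions on $\l{w_1},\f{w_3}$ forbid any such subword from straddling $a^sb^t$ into $w_1$ or $w_3$. Hence every critical subword driving the reduction is over a pair involving a third generator, whose name $x_{i'}$ ($i'\ne i,j$) is supplied by $w_1$ or $w_3$. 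This step is exactly where the extra-large hypothesis $m_{ij}\ge 4$ is used, and is what fails in merely large type.

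Next I would localise the cancellation. Writing $g=(g_1a^s)(b^tg_3)$, the loss of length comes from a common cancelling piece $d$, so that $g_1a^s=_Ge_1d$ and $b^tg_3=_Gd^{-1}e_2$ are geodesic factorisations with $g=_Ge_1e_2$. Applying Proposition~\ref{twoposs} to the two geodesic words $w_1a^s$ (ending in $a=x_i^{\pm1}$) and $e_1d$ (ending in $\l{d}$) for $g_1a^s$, their maximal two-generator suffixes involve precisely the names $x_i$ and $x_{i'}$, so the rewriting that produces $d$ uses only the dihedral relation of $G(i',i)$; symmetrically the rewriting producing $d^{-1}$ uses only that of $G(i',j)$. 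Morally this places $d$ in $G(i',i)\cap G(i',j)=\langle x_{i'}\rangle$, forcing $d=c^q$ with $c=x_{i'}^{\pm1}$; rigorously I would instead track the $\tau$-moves across the junction and use the uniqueness of the maximal divisors $\RD_{i'i}$ and $\LD_{i'j}$ from Proposition~\ref{longestld} to conclude that $d$ is a power of the single generator $x_{i'}$, with $d^{-1}=c^{-q}$ carrying the same exponent $q$ because the two pieces are mutually inverse.

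Finally, the three factorisations are all geodesic: $e_1c^q$ and $c^{-q}e_2$ are obtained from the geodesic pieces $g_1a^s$ and $b^tg_3$ by length-preserving $\tau$-rewriting, and $e_1e_2$ is geodesic because the length drops by exactly $2q$ at the single $c^qc^{-q}$ cancellation while Proposition~\ref{ngreduceprop} guarantees that no further reduction remains available. The main obstacle is the content of the third paragraph: showing rigorously that the cancelling piece is a power of one generator with equal exponents on the two sides, and that all three factorisations hold simultaneously as geodesics. This requires careful bookkeeping of the $\tau$-moves at the $a^s\mid b^t$ junction and genuinely uses that $m_{i'i},m_{i'j}\ge 4$ as well as $m_{ij}\ge 4$, which is why the lemma is stated only for extra-large type.
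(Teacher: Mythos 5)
You should first note that the paper itself offers no proof of this lemma: it is imported verbatim from \cite[Lemma 7.12]{CHR}, with the remark that the proof given there applies in the present generality. So your attempt can only be judged on its own merits, and on those merits it has a genuine gap. The first paragraph is fine: $\RD_{ij}(g_1)=\LD_{ij}(g_3)=1$ does force, via Proposition~\ref{longestld}, that no geodesic word for $g_1$ ends in a letter named $x_i$ or $x_j$, whence $w_1a$ is geodesic, $w_1a^s$ is geodesic by Corollary~\ref{twoposscor}, and symmetrically for $b^tw_3$; and the observation that no $\{x_i,x_j\}$-critical subword can meet $a^sb^t$ (the alternating subword there has length $2<m_{ij}$, using $m_{ij}\ge 4$) correctly locates where the extra-large hypothesis enters, though even here you would need to argue that the whole reducing sequence cannot live inside the geodesic pieces $w_1a^s$ or $b^tw_3$, not merely that individual critical subwords avoid $a^sb^t$ (geodesic words may well contain critical subwords).

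The core gap is in your second and third paragraphs. You posit, without justification, a tripod decomposition: that the failure of geodesicity of the product $(g_1a^s)(b^tg_3)$ of two geodesic factors is realised by a single common cancelling piece $d$, with $g_1a^s=_Ge_1d$ and $b^tg_3=_Gd^{-1}e_2$ geodesic factorisations and $e_1e_2$ geodesic. In a group that is not free, length loss in a product of geodesics need not be realised by cancellation of a common subelement --- it can occur through relator applications mixing the two factors --- and establishing that here it \emph{is} so realised, with $d$ moreover a power of a single generator $x_{i'}$ with $i'\neq i,j$, is precisely the content of the lemma, not a step on the way to it. Your proposed rigorous substitute is a plan rather than an argument: you never show that $d$ lies in $G(i',i)$ or in $G(i',j)$ (the word ``morally'' carries the entire weight, and the intersection claim $G(i',i)\cap G(i',j)=\langle x_{i'}\rangle$ is itself unproved here), nor that one name $x_{i'}$ serves simultaneously at both ends of the junction, nor that all three factorisations are geodesic at once; in particular Proposition~\ref{ngreduceprop} cannot certify that $e_1e_2$ is geodesic, since it is a statement about reducing sequences of words already known to be non-geodesic. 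You concede this obstacle yourself in your closing sentences; as written, the proposal reduces the lemma to an unproved claim of essentially the same depth, which is exactly why the paper imports the detailed proof from \cite{CHR} rather than reproving it.
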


\section{Conjugacy in dihedral Artin groups}

We recall that $G(m)$ denotes the dihedral Artin group
\[ \langle  x_1,x_2 \mid {}_m(x_1,x_2) = {}_m(x_2,x_1) \rangle. \]
This section is devoted to the proof of the case $n=2$ of
Theorem~\ref{thm:main}; that is, where $G=G(m)$.

\begin{proposition}
\label{dihedralconj}
The conjugacy of two elements represented as words of length at most $\ell$
in the dihedral Artin group $G(m)$
group of large type can be decided constructively in time $O(\ell)$.
\end{proposition}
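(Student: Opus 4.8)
The plan is to solve conjugacy in $G(m)$ by first reducing each input element to a \emph{cyclically reduced} geodesic representative, and then comparing these representatives up to the natural symmetries of the associated cyclic word. First I would reduce each input word $u$ to a geodesic using Proposition~\ref{dihedral_geodesics}: scan $u$ to compute $p(u)$ and $n(u)$, and while $p+n>m$ locate an over-critical subword and apply the over-critical $\tau$-move, which strictly shortens the word. Once geodesic, I would cyclically reduce, repeatedly testing for each letter $a \in \{x_1^{\pm1}, x_2^{\pm1}\}$ whether $a^{-1}wa$ is shorter in $G(m)$ and replacing $w$ by the shorter conjugate when it is. Since all relators have even length, each reduction drops the length by $2$, so only $O(\ell)$ reductions can occur. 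The first place care is needed is that a shortening conjugate need not be visible as a free cancellation: when $w$ contains a critical subword (so $p(w)+n(w)=m$), a $\tau$-move may be required to expose it. I would handle this by working with the alternating-syllable description of $w$, which for two generators is compact enough that a single linear scan detects whether a $\tau$-aware cyclic conjugation shortens $w$.

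Next I would classify the cyclically reduced element. A cheap first filter is the image in the abelianisation of $G(m)$, which is $\Z^2$ when $m$ is even and $\Z$ when $m$ is odd; since conjugation fixes abelianised images, $g$ and $h$ can be conjugate only if these agree. The degenerate cases — where $w$ is a power of a single generator, or a power of $\Delta$ (central when $m$ is even) — I would dispatch directly, since their conjugacy is governed entirely by the abelianisation together with the relation $\Delta x_1 \Delta^{-1}=x_2$ that holds when $m$ is odd. For the remaining \emph{generic} cyclically reduced words I claim that two such elements are conjugate in $G(m)$ if and only if their cyclically reduced geodesic words agree up to the group of symmetries of the cyclic word generated by (i) cyclic rotation, (ii) the swap $\delta$ induced by conjugation by $\Delta$ when $m$ is odd, and (iii) the $\tau$-ambiguity, which identifies the two distinct geodesic representatives of any critical word with $p+n=m$. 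Granting this, the final comparison is a linear-time test: reduce each cyclic word to a canonical rotation and check equality under the finite symmetry group $\langle \delta, \tau\rangle$.

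The main obstacle is proving the characterisation in the second step — that the only conjugacies between generic cyclically reduced elements are the visible ones coming from rotation and the symmetries $\delta$ and $\tau$. The hard direction is to rule out \emph{hidden} conjugacy: if $t^{-1}gt=h$ with $g,h$ cyclically reduced, I must show $t$ can be taken as a product of letters each effecting a length-preserving cyclic conjugation, so that $g$ and $h$ are joined by a chain of rotations and $\tau/\delta$ moves. This is where the interaction between the multiple geodesic representatives of critical words and conjugation by $\Delta$ is most delicate, since exactly these maximally alternating words carry the extra $\Delta$-symmetry. I would establish it by tracking how a length-minimal conjugating element acts on the alternating syllables of $g$, using the identities such as ${}_p(x,y)={}_n(y^{-1},x^{-1})\Delta$ that underlie the definition of $\tau$. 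Finally, I would confirm that every step — geodesic reduction, cyclic reduction, and the symmetry-aware cyclic-word comparison — runs in $O(\ell)$ time, which is feasible precisely because the two-generator structure lets each word be represented and manipulated through its list of alternating syllables.
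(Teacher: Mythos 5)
Your approach is genuinely different from the paper's, but it contains a real gap, and you have in fact named it yourself: the entire difficulty of the proposition is concentrated in your unproved claim that two ``generic'' cyclically reduced elements of $G(m)$ are conjugate if and only if their geodesic representatives agree up to cyclic rotation, $\delta$, and $\tau$-moves. Everything before and after that claim is routine bookkeeping; the claim itself is the theorem, and your plan for it (``tracking how a length-minimal conjugating element acts on the alternating syllables'') is a one-sentence gesture at an induction on conjugator length that would require substantial case analysis of how critical subwords interact with conjugation --- compare the paper's Proposition~\ref{dhgenconj}, which carries out exactly this kind of syllable-tracking induction merely for the special case $g=x_1^k$, and is already delicate. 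Moreover, the claim as you formulate it is not quite well posed: $\tau$ is not a global symmetry of a word but a local rewriting of critical subwords, and an element with $p+n=m$ can have more than two geodesic representatives (in $G(3)$, the element $x_1x_2x_1^2$ is also represented by $x_2x_1x_2x_1$ and $x_1^2x_2x_1$), so ``check equality under the finite symmetry group $\langle\delta,\tau\rangle$'' does not describe a well-defined linear-time test without first proving that the $\tau$-equivalence classes of cyclic geodesics admit a canonical form computable in $O(\ell)$. Your complexity accounting is also optimistic: each shortening round costs a scan, and with $O(\ell)$ rounds the naive cost of your reduction phase is $O(\ell^2)$ unless you supply an amortisation argument, which you do not.

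The paper avoids all of this by changing coordinates rather than working with geodesics over $\{x_1,x_2\}$ at all. It rewrites $G(m)$ on generators $y_1,y_2$ (for $m$ even $y_1=x_1$, $y_2=x_1x_2$ with relation $y_1y_2^{m/2}=y_2^{m/2}y_1$; for $m$ odd $y_1={}_m(x_1,x_2)$, $y_2=x_1x_2$ with $y_1^2=y_2^m$), exhibiting $G(m)$ as a central extension by $Z=\langle z\rangle$ of a free product of two cyclic groups. Conjugacy in the free product is just cyclic permutation of normal forms (Magnus--Karrass--Solitar), and the lifting problem is killed by one structural observation: for $g\notin Z$ the centraliser of $gZ$ in $G(m)/Z$ is cyclic, so its preimage in $G(m)$ is abelian, whence $g$ is conjugate to no other element of the coset $gZ$. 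Consequently a single conjugator found in the quotient suffices --- one tests $gZ\sim hZ$, and if $gZ=(hZ)^{xZ}$ then $g\sim h$ exactly when $g=h^x$. Each step (rewriting, one-pass collection to free-product normal form plus a power of $z$, cyclic-word comparison, writing down the conjugator) is a single linear pass. What your route would buy, if completed, is a conjugacy-geodesic description intrinsic to the standard generators, which is of independent interest (and close in spirit to what \cite{CHHR} pursues); but as it stands the proposal assumes its main lemma rather than proving it.
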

\label{sec:dihedral}

\begin{proof}
For $m$ even,
$$G(m) \cong \langle y_1,y_2 \mid y_1y_2^{m/2} = y_2^{m/2}y_1\rangle.$$
with $y_1=x_1,y_2=x_1x_2$. So, the quotient of $G(m)$ by
its  central subgroup $\langle z:= y_2^{m/2} \rangle$
is a free product $C_{m/2} \star C_\infty$.  Since
that free product has trivial centre, we see that $Z:= \langle z\rangle$ is the centre
of $G(m)$.

For $m$ odd,
$$G(m) \cong \langle y_1,y_2 \mid y_1^2=y_2^m \rangle$$
with $y_1={}_m(x_1,x_2),y_2=x_1x_2$.
So, now the quotient of $G(m)$ by the central subgroup
$\langle z:= y_2^m \rangle$ is a free product $C_2 \star C_m$, and again 
$Z:= \langle z\rangle$ is the centre of $G(m)$.

In either case, and also when $m$ is infinite (and so $Z=1$),
$G(m)/Z$ is a free product of two cyclic groups and so,
for $g \not\in Z$, $C_{G(m)/Z}(gZ)$ is cyclic. So the inverse image of
$C_{G(m)/Z}(gZ)$ in $G(m)$ is abelian; this means in particular that $g$ cannot
be conjugate within $G(m)$ to any other element of $gZ$.

Conjugacy testing in free products is straightforward, by \cite[Theorem 4.2]{MKS}:
in the case of a free product of two cyclic groups, two elements are conjugate if
and only if the normal form of one is (essentially) a cyclic conjugate of that
of the other.  Suppose $g,h \in G(m)$ are given. If $g$ and $h$ are conjugate,
then so are the elements $gZ$ and $hZ$ of $G/Z$, and in that case,
for any element $xZ$ with $gZ=hZ^{xZ}$, by the final statement in the
preceding paragraph, we must have $g = h^x$. Hence, to test for conjugacy of
$g$ and $h$, we first test for conjugacy of $gZ$ and $hZ$; then, having found
some $x$ with $gZ=hZ^{xZ}$, we check whether $h^x=g$. If $gZ$ and $hZ$
are non-conjugate, or otherwise if for $x$ as above
$h^x \neq g$, then $g$ and $h$ are proved non-conjugate.

The steps in this process are
\begin{enumerate}
\item conversion of input words written over the standard
generating set $\{x_1,x_2\}$ to words over $\{y_1,y_2\}$;
\item collection of the resulting
words into a free product normal form modulo $Z$, followed by a power of $z$;
\item checking whether the free product normal form of one of the
words is a cyclic conjugate of the other and the powers of $z$ are the same.
\item writing down a conjugator as a word over $\{y_1,y_2\}$, and then rewriting
it as a word over $\{x_1,x_2\}$.
\end{enumerate}
Each of these four steps can be done in linear time 
\end{proof}

\section{Conjugates of powers of a generator}
\label{sec:genpowers}

The conjugates of powers of a generator are easy to recognise in
dihedral Artin groups.

\begin{proposition}\label{dhgenconj}
Suppose that $h$ is cyclically reduced,
and conjugate to $g:= x_1^k$ in $G(m)$,
for some $k\neq 0$.
If $h\neq g$, then $m$ is odd and $h=x_2^k$.
\end{proposition}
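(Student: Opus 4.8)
The plan is to work in the quotient $G(m)/Z$ and use the explicit free product structure established in the proof of Proposition~\ref{dihedralconj}. The key observation is that conjugacy of $g=x_1^k$ and $h$ in $G(m)$ forces conjugacy of their images $gZ$ and $hZ$ in the free product $G(m)/Z$, where the conjugacy problem is completely controlled by cyclic permutation of free product normal forms. Since the centre $Z$ cannot absorb any discrepancy (an element outside $Z$ is never conjugate to a different element of its own $Z$-coset), understanding the free product picture should pin down $h$ exactly.

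First I would treat the case $m$ even. Here $G(m)/Z \cong C_{m/2}\star C_\infty$ with $y_1=x_1$ generating the $C_\infty$ factor (modulo its image) and $y_2=x_1x_2$ with $z=y_2^{m/2}$. So $x_1 = y_1$ maps to a generator of the infinite cyclic factor, and $x_1^k$ has free product normal form a single syllable $y_1^k$ lying in that factor. The conjugates of such a single-syllable element in a free product are, up to the cyclic-conjugation criterion of \cite[Theorem 4.2]{MKS}, again single syllables in the \emph{same} conjugate of the factor — in particular their normal forms are again powers of $y_1$, i.e. powers of $x_1$. I would argue that the only cyclically reduced candidates are $x_1^k$ itself (since $C_\infty$ is infinite and all its nontrivial elements are non-conjugate and distinct), giving $h=g$, which is excluded; so no new $h$ arises, consistent with the claim that the even case produces nothing.

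Next, the case $m$ odd, which I expect to be where the actual content lies. Here $G(m)/Z\cong C_2\star C_m$ with $y_1={}_m(x_1,x_2)$ of order $2$ and $y_2=x_1x_2$ of order $m$ modulo $z=y_2^m$. The plan is to rewrite $x_1$ (and $x_2$) in terms of $y_1,y_2$ and compute the free product normal forms of $x_1^k$ and $x_2^k$, then verify via the cyclic-conjugation criterion that these two are conjugate in $C_2\star C_m$ while $x_1^k$ is \emph{not} conjugate to any other cyclically reduced word of the same length. The symmetry here is the element $\Delta$: since $m$ is odd, $\Delta$ conjugates $x_1$ to $x_2$ (as noted in the excerpt), so $x_2^k=\Delta x_1^k\Delta^{-1}$ is automatically a conjugate of $x_1^k$, which supplies the one genuinely new possibility. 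The task is then to show these are the \emph{only} possibilities: that any cyclically reduced $h$ conjugate to $x_1^k$ has image $hZ$ equal up to cyclic permutation to the normal form of $x_1^kZ$, and that back in $G(m)$ this forces $h\in\{x_1^k,x_2^k\}$.

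The main obstacle will be the last step: passing from the free-product conclusion back to an exact statement in $G(m)$ itself. Knowing $gZ$ and $hZ$ are conjugate tells us only about cosets; I must combine this with the abelianity argument from Proposition~\ref{dihedralconj} (that $g\notin Z$ is conjugate to no other element of $gZ$) to promote the coset-level equality to the element-level identity $h=x_2^k$, and I must also confirm that the cyclic conjugate of the normal form corresponds, upon lifting, to $x_2^k$ rather than to some word that fails to be cyclically reduced or equals $x_1^k$. Care is also needed for small or degenerate $k$ and for checking that $x_2^k$ really is cyclically reduced, so that it is a legitimate value of $h$.
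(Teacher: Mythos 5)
Your reduction to the central quotient is set up correctly as far as conjugacy classes go: the abelian-preimage fact from the proof of Proposition~\ref{dihedralconj} does show that the conjugacy class of $g=x_1^k$ in $G(m)$ maps bijectively onto the conjugacy class of $gZ$ in $G(m)/Z$ (distinct conjugates of $g$ lie in distinct $Z$-cosets), so if you knew $hZ=x_2^kZ$ you could legitimately conclude $h=x_2^k$. The genuine gap is at the step you yourself flag as ``the main obstacle'' but never resolve: the hypothesis that $h$ is cyclically reduced is a statement about geodesic length over $\{x_1^{\pm 1},x_2^{\pm 1}\}$, and it is invisible in the free product normal form over $\{y_1,y_2\}$. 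Conjugacy of $hZ$ with $y_1^k Z$ (for $m$ even), or with the alternating word $(y_2^{(m+1)/2}y_1)^kZ$ (for $m$ odd), tells you only that the cyclically reduced \emph{core} of $hZ$ in the free-product sense is the right one; it does not tell you that $hZ$ equals that core. To exclude the infinitely many other elements of the class you would need the implication: if $hZ$ is not cyclically reduced as a free product element, then $h$ is not cyclically reduced in the Artin sense. That implication is false for general elements --- for $m$ even, $x_1x_2^{-1}$ is cyclically reduced in $G(m)$, yet its image $y_1y_2^{-1}y_1$ begins and ends in the $C_\infty$ factor --- and proving it just for elements of the class of $x_1^k$ is essentially the proposition itself. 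So your even-case assertion that ``the only cyclically reduced candidates are $x_1^k$'' (justified only by properties of $C_\infty$, which live in the quotient) and your odd-case claim that the quotient picture ``forces $h\in\{x_1^k,x_2^k\}$'' are circular as they stand: what is missing is a lemma translating a syllable cancellation in $C_2\star C_m$ or $C_{m/2}\star C_\infty$ into a length reduction $|aha^{-1}|<|h|$ for some letter $a$ upstairs, and nothing in your sketch supplies it.

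For contrast, the paper avoids the quotient entirely and argues by induction on the length of a conjugator $f$ with $f^{-1}gf=h$: one may assume $f$ has no $x_1^{\pm 1}$ as left divisor, so for a geodesic word $u$ representing $f$ the word $u^{-1}x_1^ku$ is freely reduced but non-geodesic; the criterion $p(w)+n(w)\le m$ of Proposition~\ref{dihedral_geodesics} then forces $p(u)\ge m/2$ or $n(u)\ge m/2$, and an over-critical $\tau$-reduction (which applies $\delta$, and this is precisely where $x_2^k$ appears when $m$ is odd) produces a strictly shorter conjugator, with an explicit alternating-word computation handling the boundary case $p=m/2$ when $m$ is even. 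If you wanted to rescue your approach you would need comparable quantitative control of Artin geodesic length in terms of the $(y_1,y_2)$-coordinates, at which point you would have rebuilt most of the direct proof.
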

\begin{proof}
Recall that if $m$ is odd, then the element $\Delta= {}_m(x_1,x_2)$
conjugates $x_1$ to $x_2$; if $m$ is even, then $\Delta$ is central.

Let $f^{-1}gf = h$. The proof is by induction on $|f|$.
The case $|f|=0$ is clear, so assume $|f| > 0$.
We may assume that $g = x_1^k$ with $k \ne 0$.
Now $f$ does not have $x_1^{\pm 1}$ as a left divisor,
or else we could find a shorter conjugator, so $gf$ and $f^{-1}g$ are geodesic
factorisations.  But $f^{-1}gf$ cannot be a geodesic factorisation, or else
$h$ would not be cyclically reduced.

Let $u$ be a geodesic word for $f$. Then  $\f{u} = x_2^{\pm 1}$, so
$u^{-1}x_1^ku$ is freely reduced. Let $p:=p(u)$, $n:=n(u)$,
as defined in Section~\ref{sec:artin}.  
We have $p,n<m$, since otherwise $f$ has $\Delta^{\pm 1}$ as a left divisor,
and hence also $x_1^{\pm 1}$.

Now if $p>0$, then $u^{-1}x_1^ku$ contains a subword
$w={}_p(x^{-1},y^{-1})\xi (z,t)_p$,
where $\{x,y\}=\{z,t\} = \{x_1,x_2\}$, and the subword $x_1^k$ representing $g$ is
within $\xi$. This is equal in $G$ to the word
$w'={}_{m-p}(y,x)\delta(\xi) (t^{-1},z^{-1})_{m-p}$,

If $p > m/2$ then $w'$ is shorter than $w$,
and so we can find a representative
of $h$ of the form $u'^{-1}\delta(x_1^k)u'$
that has $w'$ as a subword 
and for which $u'$ is shorter than $u$.
So $h = f'^{-1} g' f'$ with $|f'| < |f|$ and $g'=\delta(g)$
conjugate to $g$.
The result then follows by the inductive hypothesis.
Similarly the result follows when $n > m/2$.

Since $u^{-1}x_1^ku$ is non-geodesic, we have
$p(u^{-1}x_1^ku)+ n(u^{-1}x_1^ku) > m$.
This inequality cannot hold if both
$p<m/2$ and $n<m/2$.
Hence to complete the proof it remains to consider the
cases where $m$ is even and
either $p=l$ or $n=l$, where $l=m/2$.

Assume that $k>0$ (the case $k<0$ is similar). Then $p(u^{-1}x_1^ku)+
n(u^{-1}x_1^ku) >  m$ is only possible if $p=l$ and $u$ has a prefix
${}_{l}(x_2,x_1)$ or ${}_l(x_2^{-1},x_1^{-1})$.
In the former case, we have
\[
 {}_{l}(x_2,x_1)^{-1}\, x_1^k\, {}_{l}(x_2,x_1) =
 (x_1^{-1},x_2^{-1})_l\, x_1^{k-1}\, {}_{l+1}(x_1,x_2)\]
The right hand side is an over-critical word and so
can be reduced as described in Section~\ref{sec:artin}.
When $l$ is even we express the right hand side as
 ${}_l(x_1^{-1},x_2^{-1})\, x_1^{k-1}\, (x_2,x_1)_{l+1}$ and see 
that
\begin{eqnarray*}
 {}_l(x_1^{-1},x_2^{-1})\, x_1^{k-1}\, (x_2,x_1)_{l+1}&=_G&
 {}_l(x_2,x_1)\, x_1^{k-1}\, (x_1^{-1},x_2^{-1})_{l-1}\\&=&
 {}_l(x_2,x_1)\, x_1^{k-1}\, {}_{l-1}(x_2,x_1)^{-1}\\&=&
 {}_{l-1}(x_2,x_1)\, x_1^k\, {}_{l-1}(x_2,x_1)^{-1},\end{eqnarray*}
while if $l$ is odd we express the right hand side as
 ${}_l(x_2^{-1},x_1^{-1})\, x_1^{k-1}\, (x_1,x_2)_{l+1}$ and see
that
\begin{eqnarray*}
 {}_l(x_2^{-1},x_1^{-1})\, x_1^{k-1}\, (x_1,x_2)_{l+1}&=_G&
 {}_l(x_1,x_2)\, x_1^{k-1}\, (x_2^{-1},x_1^{-1})_{l-1}\\&=&
 {}_l(x_1,x_2)\, x_1^{k-1}\, {}_{l-1}(x_1,x_2)^{-1}\\&=&
 {}_{l-1}(x_1,x_2)\, x_1^k\, {}_{l-1}(x_1,x_2)^{-1}.\end{eqnarray*}
In either case,
we have replaced $f$ by a shorter element, and the result follows by
the inductive hypothesis.
A similar argument applies when $u$ has the prefix
${}_l(x_2^{-1},x_1^{-1})$.
\end{proof}

From now on, and for the remainder of this article,  suppose that
$G = \langle x_1,\ldots,x_n \rangle$ is an Artin group of extra-large type.

Note that when $n\geq 3$ two generators $x_i,x_j$ of $G$ can be
conjugate even when $m_{ij}$ is even. This happens if there is a
sequence of generators
$x_i = x_{i_1},x_{i_2},\ldots,x_{i_k}=x_j$
with each $m_{i_t,i_{t+1}}$ odd
(and so $\Delta_{i_ti_{t+1}}:=(x_{i_t},x_{i_{t+1}})_{m_{i_ti_{t+1}}}$ conjugates $x_{i_t}$ to $x_{i_{t+1}}$). In that case, we say that $x_i^k$ and
$x_j^k$ are {\em generator conjugate}. 

The following proposition generalises Proposition~\ref{dhgenconj} to show
that the conjugates of powers of a generator are straightforward to recognise.
In fact the proof of this result is valid assuming only that $G$ has large
type.
We recall the definition of $\LD_{ij}(g)$ and $\RD_{ij}(g)$ from
Section~\ref{sec:artin} as the longest left and right divisors of $g$ in
$G(i,j)$ respectively, where $G(i,j) := \langle x_i,x_j \rangle < G$.

\begin{proposition}\label{genconj}
Suppose that $h$ is cyclically reduced,
and conjugate to $g:= x_i^k$ in $G$,
for some $k\neq 0$.
If $h\neq g$, then $h=x_j^k$  for some $j$,
and $g$ and $h$ are generator conjugate.
\end{proposition}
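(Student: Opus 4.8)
The plan is to generalise the proof of Proposition~\ref{dhgenconj} by reducing each inductive step to a computation inside a single dihedral subgroup $G(i,j)$. Write $h = f^{-1}gf$ with $g = x_i^k$, choose $f$ to be a shortest conjugator, and induct on $|f|$. The base case $|f|=0$ gives $h=g$, so suppose $|f|>0$. First I would argue, exactly as in the dihedral case, that $f$ has no $x_i^{\pm1}$ as a left divisor: since $x_i$ commutes with $g=x_i^k$, any such divisor could be absorbed to produce a shorter conjugator. Consequently $gf$ and $f^{-1}g$ are geodesic factorisations, whereas $f^{-1}gf$ is not; otherwise a geodesic word $u^{-1}x_i^ku$ for $h$, with $u$ a geodesic word for $f$, would begin with $\l{u}^{-1}$ and end with $\l{u}$, contradicting that $h$ is cyclically reduced.

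Next let $x_j$, with $j\ne i$, be the name of $\f{u}$, and let $v$ be the maximal $\{x_i,x_j\}$-prefix of $u$, so that $u = vu'$ is a geodesic factorisation and $v^{-1}x_i^kv \in G(i,j)\cong G(m_{ij})$. The central claim is that the non-geodesy of $u^{-1}x_i^ku = u'^{-1}(v^{-1}x_i^kv)u'$ is confined to the factor $v^{-1}x_i^kv$ inside $G(i,j)$. Indeed, since $u$ and $u^{-1}$ are geodesic, any length-reducing sequence must straddle the central block $x_i^k$; its first critical subword then overlaps $x_i^k$, forcing $x_i$ to be one of its two generators, and the letters flanking $x_i^k$ all have name $x_j$, forcing the other generator to be $x_j$. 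I would then run the argument of Proposition~\ref{dhgenconj} verbatim inside $G(i,j)$, using $p(v)$ and $n(v)$ (both strictly less than $m_{ij}$, since $p=m_{ij}$ or $n=m_{ij}$ would make $\Delta_{ij}$, and hence $x_i^{\pm1}$, a left divisor of $f$) together with the $\tau$-moves and over-critical reductions of Section~\ref{sec:artin}.

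This produces $v'$ shorter than $v$ with $v^{-1}x_i^kv =_G v'^{-1}g'v'$, where $g' = \delta_{ij}(x_i^k)$ is the image of $x_i^k$ under conjugation by $\Delta_{ij}$; that is, $g' = x_j^k$ when $m_{ij}$ is odd, in which case $x_i$ and $x_j$ are generator conjugate, and $g'=x_i^k$ when $m_{ij}$ is even, where $\Delta_{ij}$ is central in $G(i,j)$. Setting $f' = v'u'$ gives $h = f'^{-1}g'f'$, and since $|u|=|f|=|v|+|u'|$ we have $|f'|\le |v'|+|u'| < |v|+|u'| = |f|$. The inductive hypothesis applied to the generator power $g'$ then shows $h = x_{i''}^k$ for some $i''$ with $x_{i''}$ generator conjugate to the name of $g'$, and hence, by transitivity of generator conjugacy, to $x_i$, completing the step.

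The main obstacle is the central claim: that the reduction localises to $G(i,j)$ and leaves the tail $u'$ undisturbed. Establishing it rigorously amounts to showing that the critical factorisation of $u^{-1}x_i^ku$ supplied by the reducing-sequence machinery of Section~\ref{sec:artin} (and Proposition~\ref{ngreduceprop}) can be taken with all of its $\tau$-moves inside the $\{x_i,x_j\}$-region $v^{-1}x_i^kv$, equivalently that $v^{-1}x_i^kv$ is itself non-geodesic in $G(i,j)$ and that its shortening preserves the geodesic factorisation $v'u'$. I would prove this through the same analysis of $p$ and $n$ used in the dihedral case, reading off the positive and negative alternating subwords straddling the central $x_i^k$ and verifying that they are governed entirely by the prefix $v$.
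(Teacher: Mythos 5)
Your overall frame---induction on $|f|$, ruling out $x_i^{\pm 1}$ as a left divisor, and isolating a dihedral block at the front of the conjugator---matches the paper's opening moves, but your ``central claim'' is where the proof lives, and the route you sketch for it fails at two concrete points. First, the assertion that the first critical subword of a reducing sequence for $u^{-1}x_i^ku$ must overlap $x_i^k$ is false: Proposition~\ref{ngreduceprop}(2) only supplies a \emph{rightward} reducing sequence, and such a sequence may begin with $\tau$-moves lying entirely inside $u^{-1}$; these are length-preserving, replace $u$ by other geodesic representatives of $f$, and can change the names of the letters flanking $x_i^k$ before any critical subword touches the block. The paper confronts exactly this: it assumes for contradiction that $x_i$ is not among the first two generator names of \emph{any} geodesic representative of $f$ and shows that then no critical subword of any $u'^{-1}gu$ can meet $g$; only this legitimises the choice of the pair $\{x_i,x_j\}$. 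Your choice of $x_j$ as the name of $\f{u}$ does not: if the second generator name occurring in $u$ is some $x_l \neq x_i$, then your maximal $\{x_i,x_j\}$-prefix $v$ is just a power of $x_j$, the word $v^{-1}x_i^kv$ is geodesic in $G(i,j)$, and your dihedral reduction produces no shorter $v'$, so the induction stalls even though $u^{-1}gu$ is non-geodesic.

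Second, even with the correct pair $\{x_i,x_j\}$, localisation to the dihedral block is false in general: by Lemma~\ref{geodfaclem2}, when the block represents an element of the form $a^sb^t$, the non-geodesy of $e^{-1}(d^{-1}gd)e$ can be caused by a third generator $c$ interacting \emph{across} the block, with nothing reducible inside $G(i,j)$ at all; no analysis of $p(v)$ and $n(v)$ detects this. The step that rules this out---and which is absent from your proposal---is the paper's main argument: setting $d = \LD_{ij}(f)$ (by Proposition~\ref{longestld} this agrees with your $v$ only up to a power of a letter), either $g' = d^{-1}gd$ is a power of a generator, in which case Proposition~\ref{dhgenconj} gives generator conjugacy and the induction proceeds with the shorter conjugator $e$; or, again by Proposition~\ref{dhgenconj}, $g'$ is not cyclically reduced, so every geodesic representative of $g'$ has at least three syllables, whence $g' \neq a^sb^t$, Lemma~\ref{geodfaclem1} forces $e^{-1}g'e$ to be a geodesic factorisation, and $h$ fails to be cyclically reduced, a contradiction. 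Note that the paper thus never runs the dihedral $p$/$n$ computation of Proposition~\ref{dhgenconj} inside the ambient group at all; it invokes that proposition as a black box together with the divisor lemmas. Your plan is repairable, but only by importing precisely these two missing arguments, which constitute the actual content of the paper's proof.
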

\begin{proof}
Choose an element $f$ such that $f^{-1}gf = h$. The proof is by
induction on $|f|$.
The case $|f|=0$ is clear, so assume $|f| > 0$.
We may assume that $g = x_j^k$ with $k \ne 0$.

The result is clear if $f$ is a power of a generator so suppose not, and
let $u$ be a geodesic representative of $f$.

Suppose that neither of the first two generators occurring in any
geodesic representative of $f$ is $x_i$.
By Proposition~\ref{ngreduceprop} (ii),
the word $u^{-1}gu$ must admit a rightward reducing sequence that starts
either in $u^{-1}$ or in $g$. But our
assumption ensures that $u^{-1}gu$ can contain no critical subword that
intersects $g$. So the sequence must start wholly within $u^{-1}$, 
and in that case some word $u'^{-1}gu$, for which $u'$ is a geodesic representative of $f$, must contain a critical subword intersecting
$g$ and $u'^{-1}$. Again our assumptions forbid that.

So now (if necessary replacing $u$ by another geodesic representative) we may
assume that one of the first two generators of $u$ is $x_i$.
Let $x_j$ be the other one, and let
$f=de$ with $d := \LD_{ij}(f)$.
If $g' := d^{-1}gd$ is a power of a generator, then $g$ and $g'$ are generator
conjugate by Proposition~\ref{dhgenconj}. We then have $h = e^{-1}g'e$
and the result follows by the inductive hypothesis, since $|e| < |f|$.

So $d^{-1}gd$ is not a power of a generator and hence, by
Proposition~\ref{dhgenconj}, it cannot be cyclically reduced, and so it has a
geodesic representative (of the form $ava^{-1}$) with at least three syllables.
Any other geodesic representative can be transformed to this by a sequence of
$\tau$-moves, so must contain a critical subword and hence also at least three
syllables. 
But then, by Lemma~\ref{geodfaclem1},
$f^{-1}gf$ is a geodesic
factorisation, contradicting the fact that $h$ is cyclically reduced.
\end{proof}

\section{Conjugacy of words involving two generators}
\label{sec:twogen}

\begin{proposition}\label{2genconj}
Let $g,h$ be cyclically reduced and conjugate in $G$ with $g \in G(i,j)$ for
some $i,j$, where $g$ is not a power of a generator. Then $h \in G(i,j)$ and
$g,h$ are conjugate in $G(i,j)$.
\end{proposition}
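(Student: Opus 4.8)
The plan is to use the machinery of maximal divisors in $G(i,j)$ together with the geodesic-factorisation lemmas from Section~\ref{sec:artin}. Since $g,h$ are conjugate, there is an element $f$ with $f^{-1}gf = h$, and I would argue by induction on $|f|$. The base case $|f|=0$ gives $g=h \in G(i,j)$ and there is nothing to prove, so assume $|f|>0$. The goal of the inductive step is to show that we may replace $f$ by a strictly shorter conjugator $f'$ such that $f'^{-1}gf'$ remains cyclically reduced and lies in $G(i,j)$; peeling off one $G(i,j)$-divisor of $f$ at a time then reduces to the base case, and the conjugacy takes place in $G(i,j)$ throughout.

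First I would factor $f = de$ with $d := \LD_{ij}(f)$, so that $e$ has trivial left divisor in $G(i,j)$, i.e.\ $\LD_{ij}(e)=1$. Set $g' := d^{-1}gd$. Since $g \in G(i,j)$ and $d \in G(i,j)$, the conjugate $g'$ again lies in $G(i,j)$, and $h = e^{-1}g'e$. If $|d|>0$ then $|e|<|f|$ and it suffices to show that $g'$ (or a suitable cyclically reduced conjugate of it within $G(i,j)$) still satisfies the hypotheses, whereupon induction finishes the argument. The substantive case is therefore $d=1$, i.e.\ $\LD_{ij}(f)=1$; by the symmetric statement applied on the right (working with $\RD_{ij}$ and the factorisation from the other end), I may likewise assume $\RD_{ij}(f)=1$. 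Now I want to apply Lemma~\ref{geodfaclem1} to the product $f^{-1} \cdot g \cdot f$: here $g_1 = f^{-1}$, $g_2 = g \in G(i,j)$ is not a power of a generator by hypothesis, $g_3 = f$, and $\RD_{ij}(f^{-1}) = \LD_{ij}(f) = 1$ while $\LD_{ij}(f) = \RD_{ij}(f)^{-1}$-type conditions give $\LD_{ij}(f) = 1$.

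The key obstruction is whether $f^{-1}gf$ is a geodesic factorisation. If it \emph{is} geodesic, then $|h| = |f^{-1}| + |g| + |f| > |g|$ (as $|f|>0$), which forces $h$ to be longer than its conjugate $g$; but a short computation conjugating $h$ back by a letter shows this contradicts $h$ being cyclically reduced, since a genuine syllable of $f$ on each side can be cancelled. If instead $f^{-1}gf$ is \emph{not} a geodesic factorisation, then Lemma~\ref{geodfaclem1} applies (once $d=1=\RD_{ij}(f)$ is arranged) and forces $g_2 = g = a^s b^t$ with $a,b \in \{x_i^{\pm1},x_j^{\pm1}\}$ and $s,t>0$; this is still an element of $G(i,j)$ that is not a power of a single generator, so I would then invoke Lemma~\ref{geodfaclem2}. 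That lemma produces a new generator $c = x_{i'}^{\pm1}$ with $i' \neq i,j$ and geodesic factorisations $f^{-1}a^s =_G e_1 c^q$, $b^t f =_G c^{-q} e_2$, from which the central $c$-power can be used to shorten the conjugator while keeping the conjugate inside $G(i,j)$, again reducing $|f|$ and letting induction close the loop. The main work — and the step I expect to be delicate — is verifying in this last case that the shortened conjugator really does conjugate $g$ to a cyclically reduced element of $G(i,j)$, rather than to something that has escaped the parabolic subgroup; this is exactly where the extra-large hypothesis (needed for Lemma~\ref{geodfaclem2}) does the heavy lifting.
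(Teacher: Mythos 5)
Your overall skeleton --- factor $f=de$ with $d:=\LD_{ij}(f)$, feed the resulting triple into Lemmas~\ref{geodfaclem1} and~\ref{geodfaclem2}, and kill the geodesic-factorisation case using cyclic reducedness of $h$ --- is the paper's, but there are two genuine gaps. First, the inductive framework does not close. The paper's proof is not an induction: it assumes $f\notin G(i,j)$ (so $e\neq 1$) and derives a contradiction in both cases, concluding $f\in G(i,j)$ outright. Your reduction to the case $\LD_{ij}(f)=1$ fails because after writing $h=e^{-1}g'e$ with $g':=d^{-1}gd$, the element $g'$ need not be cyclically reduced, so the inductive hypothesis does not apply to the pair $(g',h)$; your fallback of replacing $g'$ by a cyclically reduced $G(i,j)$-conjugate $g''$, say $g'=c^{-1}g''c$ with $c\in G(i,j)$, replaces the conjugator by $ce$, whose length you cannot bound by $|f|$, so the induction on $|f|$ breaks. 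The repair, which is what the paper actually does, is to note that Lemma~\ref{geodfaclem1} never requires $g_2$ to be cyclically reduced: apply it directly with $g_1=e^{-1}$, $g_2=g'$, $g_3=e$, checking only (via Proposition~\ref{dhgenconj}, since $g$ is cyclically reduced and not a power of a generator) that $g'$ is not a power of a generator. Note also that your separate right-hand normalisation ``$\RD_{ij}(f)=1$'' is not needed: for the triple $(e^{-1},g',e)$ the hypotheses are $\RD_{ij}(e^{-1})=\LD_{ij}(e)=1$, a single condition that is automatic from the maximality of $d=\LD_{ij}(f)$.

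The serious gap is Case (ii). You assert that the power $c^q$ supplied by Lemma~\ref{geodfaclem2} ``can be used to shorten the conjugator while keeping the conjugate inside $G(i,j)$'' --- but no such shortening exists, and none is argued. From $e^{-1}a^s=_G e_1c^q$ and $b^te=_G c^{-q}e_2$ one only gets the geodesic factorisation $h=_G e_1e_2$, which produces no shorter conjugator and no $G(i,j)$-membership; Case (ii) is not a configuration to be improved by induction but an outright impossibility, and proving that is the heart of the proposition. The paper's argument: $\LD_{jk}(b^te)$ has both $b^t$ and $c^{-q}$ as left divisors, so any geodesic representative $w$ of it satisfies $p(w)+n(w)=m\geq 4$ and has at least four syllables; hence $\LD_{jk}(e)$ has at least three syllables beginning with a power of $x_k$, and symmetrically (from $a^{-s}e=_Gc^{-q}e_1^{-1}$) so does $\LD_{ik}(e)$; but then for suitable $r$ the element $x_k^re$ has distinct geodesic representatives with distinct first letters, one beginning with a word over $\{x_i,x_k\}$ and the other over $\{x_j,x_k\}$, contradicting Proposition~\ref{twoposs}. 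You flagged exactly this step as delicate and unverified, and since your proposed mechanism for it (conjugator-shortening) is not the one that works, the proposal as written does not prove the proposition.
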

\begin{proof}
Let $h = f^{-1}gf$ where $f \not\in G(i,j)$.

Let $f=de$ with $d := \LD_{ij}(f)$. So $e \ne 1$.
Then, since $g$ is not a power of a
generator, by Lemma~\ref{dhgenconj} $g' := d^{-1}gd$ cannot be
a power of a generator, so it has at least two syllables.
Now by Lemmas~\ref{geodfaclem1} and  \ref{geodfaclem2}, applied with
$g_1=e^{-1},g_2=g',g_3=e$, either
\begin{mylist}
\item[(i)] $e^{-1} g' e$ is a geodesic factorisation; or
\item[(ii)] 
$g' =_G a^sb^t$ with (without loss of generality) $a=x_i^{\pm 1}, b=x_j^{\pm
1}$ and $s,t>0$. Furthermore, for some letter $c$ with name $x_k\neq x_i,x_j$,
we have $e^{-1}a^s =_G e_1c^q$ and $b^te =_G c^{-q}e_2$, where $e_1c^q$,
$c^{-q}e_2$ and $e_1e_2$ are geodesic factorisations.
\end{mylist}
In Case (i), $h$ is not cyclically reduced, contrary to assumption. 

In Case (ii),
let $g''$ be the group element $b^te$. Then $\LD_{jk}(g'')$ has both $b^t$
and $c^{-q}$ as left divisors, and so by Lemma~\ref{dihedral_geodesics}
for any geodesic representative $w$ of $\LD_{jk}(g'')$ we have $p(w)+n(w)=m\geq 4$,
and so $w$ has at least four syllables. Then any representative of $\LD_{jk}(e)$
has at least three syllables, the first of which must be a power of
$x_k$ (recall that by construction no power of $x_i$ or $x_j$ can be a left
divisor of $e$).
Similarly, considering the group element $a^{-s}e=_Gc^{-q}e_1^{-1}$, 
we deduce that $\LD_{ik}(e)$ has at least three syllables, the first of those also
a power of $x_k$. Now for some $r$, $x_k^re$ has distinct geodesic
representatives with distinct first letters, one of which starts
with a word over $x_i,x_k$ and the other with a word over $x_j,x_k$.
This contradicts Proposition~\ref{twoposs}.
\end{proof}

\section{Complexity}
\label{sec:complexity}

We need to analyse the complexity of our algorithm in the case $n \geq 3$.
Recall that $G$ is an $n$-generator Artin group of extra-large type,
and so has a geodesic normal form arising from its biautomatic structure.
We prove
\begin{proposition}
\label{prop:complexity}
When $n \geq 3$ the algorithm described in the proof of Theorem~\ref{thm:main}
decides conjugacy between two words of length at most $\ell$ in an $n$-generator
Artin group in time $O(n\ell^3)$.
\end{proposition}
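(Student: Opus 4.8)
The plan is to bound the running time of each phase of the algorithm described in the proof of Theorem~\ref{thm:main}, exploiting the two complexity facts recorded in the introduction: that in an Artin group of extra-large type with geodesic normal form, reduction of an arbitrary word of length $\ell$ to normal form costs $O(\ell^2)$ time, and that if $w$ is already in normal form and $a$ is a letter then $aw$, $wa$ and $awa^{-1}$ can be renormalised in $O(\ell)$ time. I would record a multiplicative factor of $O(n)$ wherever the algorithm iterates over all $2n$ letters in $X \cup X^{-1}$, since the automaton has size depending on $n$.

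First I would treat the cyclic reduction preprocessing. Starting from $u,v$, we normalise once at cost $O(\ell^2)$, and then repeatedly test the $2n$ conjugates $aua^{-1}$ for a length decrease; each individual test is a single-letter renormalisation costing $O(\ell)$, so one sweep over all letters costs $O(n\ell)$. Since each successful reduction shortens the word by $2$ and lengths are bounded by $\ell$, there are at most $O(\ell)$ successful reductions, giving $O(n\ell^2)$ for this phase. Next I would handle the dispatch on the number of generators: detecting whether $u$ is a power of a single generator is linear, the generator-conjugacy test of Proposition~\ref{genconj} reduces to a path search in the Coxeter graph costing $O(n^2)$, and the two-generator case invokes Proposition~\ref{2genconj} followed by the dihedral algorithm of Proposition~\ref{dihedralconj}, which runs in $O(\ell)$. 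All of these are dominated by the three-generator case, so the cubic bound must come from there.

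The dominant phase, and the one I expect to be the main obstacle to bound cleanly, is the three-generator case where we must verify the hypotheses of Theorem~\ref{thm:AppelSchupp} and then carry out the final conjugacy test. To ensure all cyclic conjugates of $u$ are geodesic, we reduce each of the $O(\ell)$ cyclic conjugates to normal form; each such reduction can be done incrementally from its predecessor by one left-multiplication and one right-division, at $O(\ell)$ cost per conjugate, so the full check costs $O(\ell^2)$, and this is repeated at most $O(\ell)$ times as we replace $g,h$ by conjugates, giving $O(\ell^3)$ (the factor $O(n)$ enters through the per-operation automaton cost, yielding $O(n\ell^3)$). For the final test, Proposition~\ref{prop:boundlen} bounds the relevant exponent by $n \le |u| \le \ell$, so we must compute and compare normal forms of $a^n u^* a^{-n}$ against $v^*$ as $u^*$ ranges over $O(\ell)$ cyclic conjugates of $u$, $a$ ranges over $2n$ letters, and $n$ ranges up to $\ell$. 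Naively this is $O(\ell)$ conjugates $\times$ $O(n)$ letters $\times$ $O(\ell)$ exponents $\times$ $O(\ell)$ per incremental normalisation, which overshoots; the key observation that rescues the cubic bound is that for fixed $u^*$ and $a$ the words $a^nu^*a^{-n}$ form an incremental family, so building all of them for a single $(u^*,a)$ costs $O(n\ell^2)$ rather than $O(n\ell^2 \cdot \ell)$, and summing over the $O(\ell)$ choices of $u^*$ gives the claimed $O(n\ell^3)$. Making this incremental accounting precise, and confirming that the comparison step (comparing $O(\ell)$ normal forms each of length $O(\ell)$) does not dominate, is the delicate bookkeeping I would need to carry out in full.
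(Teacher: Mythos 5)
Your phase-by-phase accounting follows the paper's own proof almost exactly: quadratic initial normalisation, incremental $O(\ell)$ renormalisation of cyclic conjugates and of single-letter conjugates via biautomaticity, a restart loop executed at most $O(\ell)$ times giving $O(n\ell^3)$ for the special-cyclic-reduction phase, the $O(n^2)$ precomputed odd-edge components for the one-generator case, the $O(\ell)$ dihedral test, and Proposition~\ref{prop:boundlen} to bound the conjugating exponent by $|u|\le\ell$ in the final sweep. Your ``key observation'' for the final phase --- that for fixed $u^*$ and letter $a$ the family $a^ju^*a^{-j}$ is built incrementally, each step being one conjugation of a word already in normal form --- is precisely the paper's Step 8. (Incidentally, your claim that the naive count ``overshoots'' is a miscount: $O(\ell)$ conjugates $\times\, O(n)$ letters $\times\, O(\ell)$ exponents $\times\, O(\ell)$ per incremental normalisation is already $O(n\ell^3)$; what the incrementality actually rescues you from is from-scratch normalisation at $O(\ell^2)$ per word, which would give $O(n\ell^4)$. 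Your reuse of $n$ both for the group rank and for the conjugating exponent muddies this passage and should be repaired, as the paper does by calling the exponent $j$.)

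The one genuine gap is the comparison step, which you explicitly leave open (``confirming that the comparison step \ldots does not dominate''). It does dominate if done naively: you compute $O(n\ell^2)$ normal forms $a^ju_k'a^{-j}$, and testing each against all $O(\ell)$ words $v_k'$ of length $O(\ell)$ by direct comparison costs $O(\ell^2)$ per computed word, i.e.\ $O(n\ell^4)$ overall, breaking the claimed bound. The paper closes this by preprocessing: build a string-searching automaton (Aho--Corasick/Boyer--Moore, the paper's Step 7) for the set $\{v_k'\}$ once, in time $O(\ell^2)$, after which each equality test against the whole set costs $O(\ell)$, so the comparisons contribute $O(n\ell^3)$ in total. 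Any equivalent device (a trie or hash table of the $v_k'$ built once) would serve, but some such mechanism must be supplied; without it your proof does not establish the stated complexity. A second, minor, point: your attribution of the factor $n$ in the special-cyclic-reduction phase to ``per-operation automaton cost'' is not how the bound arises --- the automatic-structure constants are absorbed into the $O(\cdot)$ for fixed $G$, and the explicit factor $n$ comes from looping over the $2n$ letters when checking $|au_k'a^{-1}|\ge|u_k'|$ for every cyclic conjugate, as in the paper's Step 3.
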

\begin{proof}
Let $u,v$ be words of length
at most $\ell$.
We analyse each step of the algorithm.
\begin{enumerate}
\item
\label{uvnf}
Reducing $u$ and $v$ to normal form takes time $O(\ell^2)$.
\item
\label{ccuvnf}
We reduce all cyclic conjugates of $u$ and $v$ to normal form.  If $u =
a_1a_2\cdots a_t$ with $t \le \ell$ and, for $1 \leq k \leq t$, $u_k$ is the
cyclic conjugate $a_k a_{k+1}\cdots a_t a_1 \cdots a_{k-1}$ of $u$ and we
have computed the normal form $u_k'$ of $u_k$, then we can compute $u_{k+1}'$
as the normal form of $a_k^{-1} u_k' a_k$ which, by biautomaticity, can be
done in time $O(\ell)$. So the total time for this step is $O(\ell^2)$.  
\item
\label{ccccuvnf}
We check that the cyclic conjugates of $u$ and $v$ are all cyclically
reduced as group elements, by checking $|au_k'a^{-1}| \geq |u_k'|$ and
$|av_k'a^{-1}| \geq |v_k'|$ for all $k$ and all $a \in X \cup X^{-1}$
involved in $u$ and $v$.  This takes time $O(n\ell^2)$.
\item
\label{ccshort}
If any of the normal form words for $u_k'$ (or $v_k'$) or any of
their conjugates under $a \in X \cup X^{-1}$ in Steps~\ref{ccuvnf}
or~\ref{ccccuvnf} is shorter than $u$ (or $v$), then replace $u$ (or $v$) by
this shorter word and restart. Since this can happen at most $\ell$ times,
the total time so far is $O(n\ell^3)$.

At this stage $u$ and $v$ are both specially cyclically reduced,
and $|u_k'| = |u|$, $|v_k'|=|v|$ for all $k$.

\item
\label{1genalg}
If one of $u$ is a power $x_i^k$, then we can check in time $O(\ell)$ whether
$v$ is also a $k$-power $x_j^k$ of a generator.
In that case $u$ and $v$ are conjugate if $i$ and $j$ are in the same component
of the graph formed from the Coxeter graph by deleting even-labelled edges;
those components can be computed in time $O(n^2)$ in pre-processing time
before the algorithm takes input.
\item
\label{2genalg}
If $u,v \in G(i,j)$ for some $i,j$, then we are in the 2-generator case, and
we proved in Section~\ref{sec:dihedral} that we can test $u$ and $v$ for conjugacy
in time $O(\ell)$.

Otherwise, by Theorem~\ref{thm:AppelSchupp},
if $u$ and $v$ are conjugate, then
they must both involve the same set of at least three generators. If not,
then we return false.

At this stage we can apply Theorem~\ref{thm:AppelSchupp} and
Proposition~\ref{prop:boundlen}, which is stated and proved below.
In fact Proposition~\ref{prop:boundlen} implies that if $u$ and $v$ are
conjugate then $|u|=|v|$, so we return false if not.

\item
\label{ccvfsa}
Initialise an automaton for string searching for one of the normal
forms $v_k'$ of the cyclic conjugates $v_k$ of $v$.
By
\cite{AhoCorasick,BoyerMoore},
this takes time $O(\ell^2)$.

\item
\label{powergenconj}
For each letter $a \in X \cup X^{-1}$ that is involved in $u$, each normal
form $u_k'$ of the cyclic conjugate $u_k$ of $u$, and each $j$ with
$1 \leq j \leq |u|$, compute the normal form of $a^j u_k' a^{-j}$, and
use the automaton to check whether this equals any of the normal forms
$v_k'$ of the cyclic conjugates of $v$. By biautomaticity, computing
each of these normal forms takes time $O(\ell)$ as does the search for
equality with $v_k$, so the total time for this step is $O(n\ell^3)$.

(Note that we have already computed these normal forms when $j=1$ in
Step \ref{ccccuvnf}, so we could avoid that repetition.) 
\end{enumerate}
\end{proof}

\begin{proposition}
\label{prop:boundlen}
Suppose that $g,h$ are cyclically reduced elements in $G$, and suppose that,
for some generator $a \in X \cup X^{-1}$ and some $N \in \N$, $a^Nga^{-N} = h$.
Then $|g|=|h|$, and either $g=h$ or $N \leq |g|$.
\end{proposition}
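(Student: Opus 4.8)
The plan is to reduce everything to two elementary facts about how geodesic length changes under multiplication by a single letter, together with one structural input from Proposition~\ref{twoposs}. Throughout write $g_j := a^j g a^{-j}$ and $\ell_j := |g_j|$, so that $g_0 = g$ and $g_N = h$. Since all relators of $G$ have even length, the map sending an element to the parity of its geodesic length is a homomorphism to $\Z/2$; hence for any element $x$ and any letter $b$ we have $|bx| - |x| = \pm 1$, with the value $-1$ occurring exactly when $b^{-1}$ is a left divisor of $x$ (and symmetrically on the right). I will refer to these as the parity facts.

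The first and central step is a local convexity inequality: for every element $x$, $|axa^{-1}| + |a^{-1}xa| \ge 2|x|$. By the parity facts each term differs from $|x|$ by $-2$, $0$, or $+2$, with $|axa^{-1}| = |x|-2$ precisely when $a^{-1}$ is a left divisor and $a$ a right divisor of $x$, and $|a^{-1}xa| = |x|-2$ precisely when $a$ is a left divisor and $a^{-1}$ a right divisor. Here is where I would invoke Proposition~\ref{twoposs}(1) together with its analogue for final letters: all geodesic representatives of $x$ agree on the name of their first letter and on the name of their last letter, so $a$ and $a^{-1}$ cannot both be left divisors, nor both right divisors. Hence at most one of the two conjugations can shorten $x$, and a short check shows that when one shortens by $2$ the other lengthens by $2$, giving the inequality. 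I expect this inequality to be the main obstacle; without Proposition~\ref{twoposs} the two conjugations could a priori both shorten $x$, and the whole argument would collapse.

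Applying the inequality to $x = g_j$ shows that the successive differences $d_j := \ell_{j+1} - \ell_j$ are nondecreasing, i.e.\ $(\ell_j)$ is convex. Cyclic reducedness of $g$ gives $d_0 = \ell_1 - \ell_0 \ge 0$, and cyclic reducedness of $h$ gives $d_{N-1} = \ell_N - \ell_{N-1} \le 0$. Since the $d_j$ are nondecreasing we get $0 \le d_0 \le d_{N-1} \le 0$, so every $d_j$ vanishes and $(\ell_j)$ is constant. In particular $|g| = |h|$, and moreover every intermediate conjugation $g_j \mapsto g_{j+1}$ preserves length.

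It remains to bound $N$. If some $g_j$ is a power of $a$ then $g_{j\pm1} = g_j$, whence $g = h$; so assume no $g_j$ is a power of $a$, and for each $j$ let $a^{L_j}$, with $L_j \in \Z$ (negative exponents allowed for $a^{-1}$), be the maximal power of $a$ left-dividing $g_j$, whose sign is well defined by Proposition~\ref{twoposs}. In a length-preserving step the parity facts force exactly one cancellation on each side, and inspecting the two possibilities shows in both cases that the leading $a$-block grows by one, so $L_{j+1} = L_j + 1$ and $L_j = L_0 + j$ is strictly increasing with $|L_j| \le |g| - 1$. Finally I would split into cases according to whether $g$ ends in $a$, starts with $a^{-1}$, or neither: cyclic reducedness forbids $g$ from simultaneously ending in $a$ and starting with $a^{-1}$ (that would give $|aga^{-1}| = |g|-2$), so a length-preserving run simply consumes a boundary block of $a$'s (or of $a^{-1}$'s) of size at most $|g|$ and then terminates, the next step strictly lengthening; and in the remaining case the very first step already lengthens, forcing $N = 0$ and $g = h$. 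Either way $N \le |g|$, completing the proof.
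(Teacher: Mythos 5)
Your first half is sound, and it is a genuinely different and rather clean route to the equality $|g|=|h|$. The parity facts are correct (the paper itself notes that all relators have even length), Proposition~\ref{twoposs}(1) (together with its stated analogue for first letters) does rule out $a$ and $a^{-1}$ being simultaneously left (or simultaneously right) divisors, and the ``short check'' that a shortening on one side forces a lengthening on the other does go through: from $|axa^{-1}|=|x|-2$ one extracts a geodesic factorisation $x=a^{-1}ya$, and then Corollary~\ref{twoposscor} (applied on the right, and on the left after inverting) shows $a^{-2}ya^{2}$ is a geodesic factorisation, so $|a^{-1}xa|=|x|+2$. Convexity of $j\mapsto|g_j|$ pinched by cyclic reducedness at both ends then gives $|g|=|h|$ with every intermediate step length-preserving. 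The paper instead strips maximal $a$-power divisors, writing $g=a^rg_0a^s$, and tracks the core $g_0$; your convexity argument is arguably more transparent for this part.

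The bound $N\le|g|$, however, has a genuine gap, and it sits exactly where the real difficulty of the proposition lies. Your claim that a length-preserving step forces $L_{j+1}=L_j+1$ is false: in an Artin group, $|ag_ja^{-1}|=|g_j|$ can occur with $ag_j$ and $g_ja^{-1}$ both geodesic factorisations, the shortening being effected by a relation rather than by free cancellation, so no boundary block of $a$'s is consumed or created. Concretely, in $G(4)$ with $a=x_1$ one has $x_1\,(x_2x_1x_2^3)\,x_1^{-1} = x_2^3x_1x_2$ (apply ${}_4(x_1,x_2)={}_4(x_2,x_1)$ once and cancel); both elements have length $5$ and, by Proposition~\ref{dihedral_geodesics} ($p+n=3<4$), unique geodesic representatives beginning and ending in $x_2$, so neither has $x_1^{\pm1}$ as a left or right divisor: here $L_j=L_{j+1}=0$ yet $g_{j+1}\ne g_j$. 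The same example (both elements are easily checked to be cyclically reduced) refutes your closing case split: $g=x_2x_1x_2^3$ neither ends in $a$ nor begins with $a^{-1}$, yet the first step neither lengthens nor fixes $g$, so $N=0$ is not forced. (Even granting monotonicity, $L_j=L_0+j$ with $|L_j|\le|g|-1$ would only yield $N\le 2|g|$, not $N\le|g|$.) This ``twisting without boundary cancellation'' is precisely the phenomenon the paper's proof must control: it first shows, via Proposition~\ref{twoposs}, that the core elements $g_i$ never acquire $a^{\pm1}$ as divisors, and then bounds the number of non-stationary length-preserving steps by analysing the rightward critical factorisations of $ag_i$ and the chains of $\tau$-moves (with their $\delta$-twist bookkeeping) in Lemma~\ref{bound}, obtaining at most $|w|/(m-1)$ steps per two-generator piece. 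Some substitute for that lemma is indispensable; monotonicity of a leading $a$-block cannot supply it.
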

\begin{proof}
Let $g=a^rg_0a^s$ with $r,s \in \Z$ be a geodesic factorisation of $g$, where
$g_0$ has neither $a$ nor $a^{-1}$ as left or right divisor. Since $g$ is
cyclically reduced, $r$ and $s$ cannot have opposite signs, and we assume
without loss of generality that $r,s \geq 0$.
If $N \leq s$ then $N \leq |g|$, and $h = a^{r+N} g_0 a^{s-N}$ is a geodesic
factorisation of $h$, so $|g|=|h|$ and we are done. So assume that $N-s > 0$. 

Then $h=a^{N+r}g_0a^{-N+s} = a^{N-s}a^{r+s}g_0a^{-(N-s)}$.
This last product cannot be geodesic, since $h$ is cyclically reduced;
hence neither is the product $ag_0a^{-1}$, by Corollary~\ref{twoposscor}.
Since, by assumption, $g_0$ has neither $a$ nor $a^{-1}$ as left or right
divisor, the products $ag_0$ and $g_0a^{-1}$ are geodesic, and so
$|g_1| = |g_0|$, where $g_1=ag_0a^{-1}$.

We claim that $g_1$ cannot have $a$ or $a^{-1}$ as left or right divisor.
If $g_1$ has $a$ as left divisor, then we get $|a^{-1}g_1| < |g_1|$, but
$a^{-1}g_1 = g_0a^{-1}$, so this contradicts $|g_0a^{-1}| = |g_0|+1=|g_1|+1$.
If $g_1$ has $a^{-1}$ as left divisor, then $g_1a$ has a geodesic representative
beginning with $a^{-1}$ and another one (arising from $g_1a=ag_0$) beginning
with $a$, which is impossible by 
Proposition~\ref{twoposs}.
The proof for right divisors is similar.

Continuing in this way, we find a
sequence $g_0,g_1,\ldots,g_{N-s}$ of elements, all of length $|g_0|$,
such that $ag_ia^{-1} = g_{i+1}$ for $0 \leq i < N-s$, where none of
the $g_i$ have $a$ or $a^{-1}$ as left or right divisor.
Since $h = a^{r+s}g_{N-s}$, we have $|g|=|h|$.

In fact, since $ag_i$ and $g_ia^{-1}$ are geodesic factorisations but
$ag_ia^{-1}$ is not, each $ag_i$ must admit a rightward critical factorisation
$(aw^{(i)}_1)w^{(i)}_2 \cdots w^{(i)}_k$,
as defined in Section~\ref{sec:artin} (with 
$u_1=aw^{(i)}_1,u_2=w^{(i)}_2,\ldots$),
which the associated 
rightward sequence of $\tau$-moves transforms to 
$g_{i+1}a=w^{(i+1)}_1w^{(i+1)}_2 \cdots w^{(i+1)}_ka$. 
For each $j$, all of the words $w_j^{(i)}$ involve the same pair of generators.
Setting $a_0:= a$, $a_1:=\l{\tau(aw_1^{0})}$ and, for each $j$,
$a_j:=\l{\tau(a_{j-1}w_j^{(0)})}$, we see that also, for each $i< N-s$,
$a_1=\l{\tau(aw_1^{(i)})}$ and, for each $j$, $a_j=\l{\tau(a_{j-1}w_j^{(i)})}$.
For each $j$ with $1\leq j\leq k$, we apply Lemma~\ref{bound} below to the
sequence $w_j^{(0)},w_j^{(1)},\ldots, w_j^{(N-s)}$, with $a:=a_{j-1}$, $b:=a_j$.

If for some $j$, $N-s \leq |w_j^{(i)}|$, then certainly $N-s \leq |g_i|=|g_0|$,
and so $N \leq |g_0|+r+s=|g|$.
Otherwise Lemma~\ref{bound} ensures that for each $i,j$ $w_j^{(i)}=w_j^{(i+1)}$,
and so $g_i=g_{i+1}=g_0$; in that case $g=aga^{-1}=h$.
\end{proof}

\begin{lemma}
\label{bound}
Let $w=w^{(0)},w^{(1)},\ldots,w^{(N)}$ be a sequence of words in the dihedral
Artin group $G(m)$, and let $a,b \in \{x_1,x_2\}^{\pm 1}$ such that for each
$i$ with $0 \le i < N$, $aw^{(i)}$ is critical
and $\tau(aw^{(i)})=w^{(i+1)}b$.
Then either $w^{(i)}=w^{(i+1)}$ for all $i$ or $N \leq |w|/(m-1)$.
\end{lemma}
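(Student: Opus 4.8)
The plan is to turn the word recursion into a relation between group elements, and then to track the syllable rearrangement effected by $\tau$. Since each $aw^{(i)}$ is critical, Proposition~\ref{dihedral_geodesics} shows it is geodesic, and the defining formulas for $\tau$ preserve both length and the group element on critical words. Hence $|w^{(i+1)}b| = |aw^{(i)}|$, so $|w^{(i)}| = |w|$ is constant along the sequence, and the relation $\tau(aw^{(i)}) = w^{(i+1)}b$ gives $w^{(i+1)} =_G a\,w^{(i)}\,b^{-1}$. Writing $g_i$ for the element represented by $w^{(i)}$, this yields $g_i = a^i g_0 b^{-i}$ for all $i$ in range.

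The first thing I would prove is the dichotomy itself. A step is stationary exactly when $g_i = g_{i+1}$, i.e.\ when $g_i^{-1} a g_i = b$. A short computation shows this property propagates in both directions: if $g_i^{-1}ag_i = b$ then, using $g_{i+1} = a g_i b^{-1}$, one gets $g_{i+1}^{-1} a g_{i+1} = b$, and similarly $g_{i-1}^{-1} a g_{i-1} = b$. Consequently either every step is stationary, which forces $w^{(i)} = w^{(i+1)}$ for all $i$, or no step is stationary. In the latter case each of the $N$ steps is genuine, so it suffices to bound the number of genuine steps by $|w|/(m-1)$. To do this I would read off the rearrangement from the explicit formulas for $\tau$: in the generic two-block form ${}_p(x,y)\,\xi\,(z^{-1},t^{-1})_n$ with $p+n=m$, the map interchanges the leading positive block of length $p$ with the trailing negative block of length $n$, relabels the core by $\delta$, and produces the terminal letter $b$. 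Stripping $b$ and prepending $a$ shifts the boundary between the two extremal alternating blocks and the core $\xi$; the key claim is that a genuine step advances this boundary past at least $m-1$ letters that cannot be reused at the next stage, because criticality pins the combined leading-plus-trailing alternating mass at exactly $m$. Charging each genuine step to a disjoint block of $\ge m-1$ letters of the length-$|w|$ word then gives $N \le |w|/(m-1)$. (As a sanity check, an empty core $\xi$ gives $|w| = m-1$ and forces $N \le 1$, which matches the fact that $aw^{(1)}$ then fails to be critical.)

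The main obstacle is the combinatorial bookkeeping of the case analysis. Besides the generic two-block critical form there are the one-block boundary forms ${}_m(x,y)\xi$, $\xi(x,y)_m$ and their negative analogues, and the critical type can change from one step to the next; I would need to check that every such transition still advances the boundary by the claimed amount and cannot be iterated beyond the length budget. The relabelling $\delta$ must also be handled: for even $m$ it is the identity and the tracking is cleanest, while for odd $m$ it swaps the two generator names, so the invariant should be stated up to the $\delta$-action or, since $\delta^2 = \mathrm{id}$, obtained by pairing consecutive steps. A final point to tidy up is the distinction between equality of words and equality of group elements at the critical boundary, where Proposition~\ref{dihedral_geodesics} warns that geodesic representatives need not be unique; I would argue that within this controlled family the stationary element relation $g_i^{-1}ag_i=b$ does force the literal word identity $w^{(i)} = w^{(i+1)}$.
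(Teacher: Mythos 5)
Your reduction to group elements is sound as far as it goes: $w^{(i+1)} =_G a\,w^{(i)}\,b^{-1}$ is correct, and the observation that the stationarity condition $g_i^{-1}ag_i = b$ propagates in both directions is a clean way to get an all-or-nothing dichotomy \emph{at the level of elements} --- cleaner, in fact, than how the paper obtains it. But the lemma is a statement about \emph{words}, and you are working exactly in the regime $p+n=m$ where, by Proposition~\ref{dihedral_geodesics}, geodesic representatives are \emph{not} unique; so element stationarity does not give the word identity $w^{(i)}=w^{(i+1)}$ for free. You flag this and defer it, but resolving it turns out to require the same explicit combinatorics as the counting claim, so it is not a loose end that can be tidied independently. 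The more serious problem is that your ``key claim'' --- that each genuine step advances the block boundary past at least $m-1$ letters that cannot be reused --- is precisely the content of the lemma, and your proposal asserts it rather than proves it. The paper's proof supplies the three missing ingredients: (a) criticality of $aw^{(1)}$, not just of $aw^{(0)}$, forces the leading positive block in the mixed-sign case to have length $p=1$, pinning the critical form down to $aw = a\,\xi\,(b^{-1},d^{-1})_{m-1}$; (b) one then factorises $\xi = \eta\,\prod_{j=0}^{J-1}\delta^{J-j}\bigl((b^{-1},d^{-1})_{m-1}\bigr)$ with $J$ maximal, so that each $\tau$-move transports exactly one $\delta$-twisted alternating block of length $m-1$ from the right end to the left; (c) after $J+1$ steps, maximality of $J$ together with geodesity (an alternating subword of length $\geq m$ would make the word non-geodesic) shows $aw^{(J+1)}$ is no longer critical, terminating the sequence and giving $N \leq J+1 \leq |w|/(m-1)$. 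Stationarity corresponds exactly to $\eta$ empty, which is also where the word identity is verified letter-by-letter (the second letter of $aw$ must be $c^{-1}$ by free reduction). Without (a) and (b), ``charging each genuine step to a disjoint block'' is a restatement of the desired bound, not an argument; and your worry that ``the critical type can change from step to step'' dissolves once (a) is in place, since the $p=1$ form is self-propagating until termination.

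Your sanity check also fails in the case that matters. When $p=1$ and $\xi$ is empty, $w = (b^{-1},d^{-1})_{m-1}$ has $w^{(1)} = {}_{m-1}(c^{-1},a^{-1})$, and $aw^{(1)} = a\,{}_{m-1}(c^{-1},a^{-1})$ \emph{is} still critical ($p(aw^{(1)})=1$, $n(aw^{(1)})=m-1$); free reduction of $aw$ then forces $w^{(1)} = w^{(0)}$ as words, so the empty-core situation lands in the stationary branch of the dichotomy with $N$ unbounded --- not, as you claim, in the branch where $aw^{(1)}$ fails to be critical and $N \leq 1$. (Your check is correct only when $p>1$, but then $N\geq 2$ is already impossible because $aw^{(1)}$ cannot be critical, which is exactly the $p=1$ forcing you missed.) This slip is symptomatic: the dichotomy in this lemma is not ``stationary versus long core'' in the way your charging scheme suggests, but ``$\eta$ empty versus $\eta$ nonempty'' after the maximal block factorisation, and you cannot see that without carrying out the case analysis on the critical forms that your proposal postpones.
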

\begin{proof}
Since $aw^{(i)}$ is critical and hence has length at least $m$,
the result is immediate if
$N \leq 1$. So we assume that $N>1$.

We consider the various possibilities for $w=w^{(0)}$.
Without loss of generality we assume that $a$ is a generator, rather than the
inverse of one.
From the definition of critical words in \cite[Section 2]{HoltRees},
we see that there are two possibilities for $aw$ and $\tau(aw)$ where,
in both cases, $\{a,c\}=\{b,d\}=\{x_1,x_2\}$ is the generating set. 

{\bf Case 1.} $aw ={}_p(a,c)\,\xi\,(b^{-1},d^{-1})_n$ with $p,n>0$ and $p+n=m$;
$\tau(aw) = {}_n(c^{-1},a^{-1})\,\delta(\xi)\,(d,b)_p$.

{\bf Case 2.}  $aw = {}_m(a,c)\,\xi$ with $\xi$ a positive word and
$\l{\xi} =d$; $\tau(aw)= \delta(\xi)\,(d,b)_m$.

We restrict the possibilities for $w$ by using the fact that $\p{\tau(aw)}$ is
the maximal suffix of the critical word $aw^{(1)}$.

Suppose first that we are in Case 1. The fact that
$aw^{(1)}$ must start with a positive alternating subword
of length $p$ forces $p=1$, and so $n=m-1$.    
If $\xi$ is non-empty then, since $aw^{(1)}$ must end with a negative
alternating subword of length $m-1$ that ends in $d^{-1}$, so must
$\delta(\xi)$; and so $\xi$ has a suffix equal to
$\delta((b^{-1},d^{-1})_{m-1})$.  We factorise $\xi$ as
$\eta \prod_{j=0}^{J-1}(\delta^{J-j}((b^{-1},d^{-1})_{m-1}))$,
for some word $\eta$, where $J> 0$ is as large as possible.
If $\xi$ is empty then we define $\eta$ to be the empty word and $J=0$.

Then
\begin{eqnarray*}
aw &=& a\eta \prod_{j=0}^J(\delta^{J-j}((b^{-1},d^{-1})_{m-1}),\\
aw^{(1)} &=& a\,{}_{m-1}(c^{-1},a^{-1})\,\delta(\eta)
    \prod_{j=0}^{J-1}(\delta^{J-1-j}((b^{-1},d^{-1})_{m-1}).
\end{eqnarray*}
If $\eta$ is empty then, since $aw$ is freely reduced, we see that
its second letter must be $c^{-1}$ rather than $a^{-1}$, and so
$\delta^{J}(b^{-1},d^{-1})_{m-1})={}_{m-1}(c^{-1},a^{-1})$,
and we have $aw=aw^{(1)}$ and then $w=w^{(i)}$ for all $i$.

Otherwise, we see that
\begin{eqnarray*}
aw^{(J)} &=& a \prod_{j=0}^{J-1} \delta^j({}_{m-1}(c^{-1},a^{-1}))\delta^J(\eta)
((b^{-1},d^{-1})_{m-1})),\\
\tau(aw^{(J)}) &=& 
\prod_{j=0}^J \delta^j({}_{m-1}(c^{-1},a^{-1}))
\delta^{J+1}(\eta)b,\\
aw^{(J+1)} &=& a\prod_{j=0}^J \delta^j({}_{m-1}(c^{-1},a^{-1}))
\delta^{J+1}(\eta).
\end{eqnarray*}
Now, since we chose $J$ to be maximal, $\delta^{J+1}(\eta)$ does not have
an alternating negative subword of length $m-1$ as a suffix.
So if $aw^{(J+1)}$ had such a suffix, then it would overlap the subword
$\prod_{j=0}^J \delta^j({}_{m-1}(c^{-1},a^{-1}))$. But then $aw^{(J+1)}$ would
contain an alternating negative subword of length at least $m$, and would be
be non-geodesic.
Hence, in this case we see that $aw^{(J+1)}$ is not critical,
and the sequence ends with $aw^{(J+1)}$. So $N \leq J+1 \leq |w^{(0)}|/(m-1)$.

Suppose now that we in Case 2.
If $\xi$ is empty then $\tau(aw)={}_m(c,a)$, and $w^{(i)}={}_{m-1}(c,a)$
for all $i$. So we may assume that $\xi$ is non-empty.

We write $\xi=\prod_{j=0}^{J-1} \delta^{j+1}({}_{m-1}(c,a))\,\eta$,
with $J\geq 0$ as large as possible.  Then 
\begin{eqnarray*}
aw &=& {}_m(a,c)\prod_{j=0}^{J-1} \delta^{j+1}({}_{m-1}(c,a))\,\eta,\\
aw^{(1)} &=& {}_m(a,c)\prod_{j=0}^{J-2} \delta^{j+1}({}_{m-1}(c,a))\,
\delta(\eta)\, (b,d)_{m-1}
\end{eqnarray*}

If $\eta$ is empty then, since $\l{w} = \l{w^{(1)}} = d$, we see that
$aw=aw^{(1)}$ and so $w=w^{(i)}$ for all $i$.
 
So suppose that $\eta$ is non-empty. If $m$ is even, then $\delta(\eta)=\eta$,
so $\l{\delta(\eta)} = \l{\eta} = d = \f{(b,d)_{m-1}}$. If $m$ is odd, then
$\l{\delta(\eta)} \ne \l{\eta}$, so $\l{\delta(\eta)} = b = \f{(b,d)_{m-1}}$.
So  $\l{\delta(\eta)} = \f{(b,d)_{m-1}}$ in either case.
We have
\begin{eqnarray*}
aw^{(J)}) &=& {}_m(a,c)\delta^J(\eta)\prod_{j=1}^{J}\delta^{J-j}((b,d)_{m-1}),\\
\tau(aw^{(J)}) &=& \delta^{J+1}(\eta)
\prod_{j=1}^{J}\delta^{J+1-j}((b,d)_{m-1}) (d,b)_m,\\
aw^{(J+1)} &=& a\delta^{J+1}(\eta)\prod_{j=1}^{J+1}\delta^{J+1-j}((b,d)_{m-1}).
\end{eqnarray*} 
Now, since we chose $J$ to be maximal, $a\delta^{J+1}(\eta)$ does not have
an alternating postive subword of length $m$ as a prefix and, since
$\l{\delta(\eta)} = \f{(b,d)_{m-1}}$, neither does $aw^{(J+1)}$.
So $aw^{(J+1)}$ is not critical, the sequence ends with $aw^{(J+1)}$, and
again we have $N \leq J+1 \leq |w^{(0)}|/(m-1)$.
\end{proof}


Derek F. Holt\\
Mathematics Institute, University of Warwick,\\
Coventry CV4 7AL, UK.\\
dfh@maths.warwick.ac.uk

Sarah Rees,\\
School of Mathematics and Statistics, University of Newcastle,\\
Newcastle NE1 7RU, UK.\\
Sarah.Rees@ncl.ac.uk
\end{document}